\newtheorem{theorem}{Theorem}[section]
\newtheorem{lemma}[theorem]{Lemma}
\newtheorem{prop}[theorem]{Proposition}
\newtheorem{cor}[theorem]{Corollary}
\def \N {\mathbb{N}}
\def \R{\mathbb{R}}
\def \C{\mathbb{C}}
\def \supp{\rm supp}
\def \Om{\Omega}
\def \Op{\mathrm{Op}}
\def \t{\tilde}
\def \half{\frac{1}{2}}
\numberwithin{equation}{section}
\begin{document}
\title{Partial Data Calder\'on Problem with Sharp Regularity on Admissible Manifolds}


\author[Tzou]{Leo Tzou}
\address{School of Mathematics and Statistics, University of Sydney, Sydney, Australia}
\email{leo@maths.usyd.edu.au}

\subjclass[2000]{Primary 35R30}

\keywords{analysis on manifolds, sharp unique continuations, inverse problems}
\begin{abstract}
We solve the partial data Calder\'on problem on conformally transversallly anisotropic (CTA) manifolds with $L^{n/2}$ potentials - on par with sharp unique continuation result of Jerison-Kenig \cite{JerKen}. A trivial consequence of this is the sharp regularity improvement to the result of Kenig-Sj\"ostrand-Uhlmann \cite{ksu}. This is done by constructing a "Green's function" which possesses both desirable boundary conditions {\em and} satisfies semiclassical type estimates in the suitable $L^{p}$ spaces. No Carleman estimates were used in the writing of this article which makes it starkly different from the traditional approaches based on Bukhgeim-Uhlmann \cite{BukUhl} and Kenig-Sj\"ostrand-Uhlmann \cite{ksu}.

\end{abstract}

\maketitle

\begin{section}{Introduction}
The pioneering works of Bukgheim-Uhlmann \cite{BukUhl} and Kenig-Sj\"ostrand-Uhlmann \cite{ksu} on partial data Calder\'on problems for the Schr\"odinger operator $\Delta + q$ have inspired many works on the subject (see review article \cite{KenSalreview} and the references therein). Except cases where the domain geometry is trivial (e.g. flat/spherical boundary), all of them are based on the $L^2$ Carleman estimate approach developed by \cite{ksu}, \cite{BukUhl}, and \cite{KenSal}. 

In order to use the Carleman-based approach one must assume a-priori that $q\in L^\infty$ - an unsatisfactory assumption since we know that unique continuation holds even for potentials $q$ which are in $L^{n/2}$ (see \cite{JerKen}). Various {\em full} data Calder\'on problems in the $L^{n/2}$ limit were studied (see \cite{DosKenSal} and references therein) without the use of Carleman estimates. However, the techniques in \cite{DosKenSal} do not translate immediately to the more challenging partial data problems.

We propose a different method to solve partial data problems which bypasses the traditional Carleman approach. This allows us to minimize the assumption on the potential $q$ to $L^{n/2}$ - on par with the sharp assumption for unique continuations \cite{JerKen}.

Turns out that it is more convenient to apply this new approach in the more general geometric setting of "conformally transversally anisotropic" (CTA) manifold first introduced by \cite{dksu}. These are manifolds $M = \R\times M_0$ endowed with metrics conformal to $dy_1^2 \oplus g_0$ where $g_0$ is the metric on the closed compact manifold $M_0$. Suppose $\Omega \subset M$ is a smooth bounded domain is compactly contained in $\R \times \Omega_0$ where $\Omega_0\subset M_0$ is a simple domain. If $\Gamma_\pm\subset\partial\Omega$ are compactly contained in the sets 
$$\{y\in\partial \Omega \mid \pm g(\partial_{y_1}, \nu(y)) >0\}$$ 
where $\nu$ denotes the inward pointing normal, let $\Gamma_D := \partial M\backslash \Gamma_+$ and $\Gamma_N := \partial M \backslash \Gamma_-$. Let $q\in L^{n/2}(\Omega)$ and assume well-posedness of the Dirichlet BVP for $\Delta_g+ q$, denote by 
\[
\Lambda_q: H^{\half}(\partial \Om) \rightarrow H^{-\half}(\partial \Om) 
\]
the Dirichlet-Neuamm map. (See \cite{DosKenSal} for definition when $q\in L^{n/2}(\Omega)$.) 
We have the following theorem:

\begin{theorem}
\label{main theorem}
Let $q_1$ and $q_2$ be $L^{n/2}(\Omega)$ functions such that 
$$\Lambda_{q_1}f \mid_{\Gamma_N} = \Lambda_{q_2}f \mid_{\Gamma_N},\ \ \forall f\in C^\infty_0(\Gamma_D)$$ 
then $q_1 = q_2$.
\end{theorem}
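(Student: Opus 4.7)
The strategy is to reduce Theorem \ref{main theorem} to an integral identity of the form
\[
\int_\Omega (q_1-q_2)\,u_1 u_2\,dV_g = 0
\]
valid for a sufficiently rich family of complex geometric optics (CGO) solutions $u_j$ of $(\Delta_g+q_j)u_j=0$, and then to recover $q_1-q_2=0$ by stationary phase together with injectivity of the (attenuated) geodesic ray transform on the simple factor $\Omega_0$. The partial-data assumption is encoded by insisting that $u_1$ vanish on $\Gamma_+$ and $u_2$ vanish on $\Gamma_-$; with this choice the only boundary term in Green's identity is supported on $\Gamma_N$, where by hypothesis the DN maps of $q_1$ and $q_2$ agree.

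First I would perform the standard conformal reduction to the product metric $g = dy_1^2\oplus g_0$, absorbing the conformal factor into the potential (which remains in $L^{n/2}$ since the factor is smooth and positive). Next I would look for CGOs in the familiar CTA form
\[
u_1 = e^{-\tau(y_1+i\varphi)}(a_1+r_1),\qquad u_2 = e^{\tau(y_1-i\varphi)}(a_2+r_2),
\]
where $\varphi$ is a limiting Carleman weight on $M_0$ (a distance function in $\Omega_0$ satisfying $|\nabla_{g_0}\varphi|^2=1$) and $a_1,a_2$ are smooth amplitudes which solve the transport equations along geodesics of $g_0$, chosen so as to make the subsequent ray-transform inversion available.

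The heart of the argument, and the main obstacle, is the construction of the remainders $r_j$ without invoking any $L^2$ Carleman estimate. What is needed is a right inverse $G_\tau$ of the conjugated operator $e^{\mp\tau\rho}\Delta_g e^{\pm\tau\rho}$ (with $\rho=y_1+i\varphi$) which \emph{simultaneously} (i) realises the desired vanishing of $u_j$ on $\Gamma_\pm$, and (ii) satisfies the sharp $L^p$ bound
\[
\|G_\tau f\|_{L^{2n/(n-2)}(\Omega)} \le C\,\|f\|_{L^{2n/(n+2)}(\Omega)},
\]
at the critical Sobolev scaling of Jerison--Kenig \cite{JerKen}. By H\"older this estimate allows an $L^{n/2}$ potential to be absorbed in a fixed-point iteration for $r_j$. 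I would construct $G_\tau$ by Fourier decomposing in $y_1$ and expressing the conjugated operator through the resolvent of $-\Delta_{g_0}$ on the closed factor $M_0$ at the complex spectral parameter $(\xi+i\tau)^2$; a one-sided choice of fundamental solution in $y_1$ (Cauchy integral on a half-line) produces the ``Green's function'' with the required partial-boundary support, while sharp Kenig--Ruiz--Sogge and compact-manifold resolvent estimates on $M_0$ deliver the $L^{p}\to L^{p^*}$ bound. A contraction mapping in $L^{2n/(n-2)}$, after splitting $q$ into a bounded piece and a small $L^{n/2}$ piece, then yields $r_j$ with $\|r_j\|_{L^{2n/(n-2)}}=o(1)$ as $\tau\to\infty$.

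Having secured the CGOs, I would substitute them into the integral identity, use the $a_j$'s to absorb subprincipal errors, and pass to the limit $\tau\to\infty$. After approximating $q_1-q_2$ by smooth functions in $L^{n/2}$ to justify a stationary-phase argument in $y_1$ along the level sets of $\varphi$, the leading-order term reduces to the attenuated geodesic ray transform of the partial Fourier transform (in $y_1$) of $q_1-q_2$ over the geodesics of $\Omega_0$. Injectivity of this transform on simple manifolds (which is available since $\Omega_0$ is simple) then gives $q_1=q_2$. The central difficulty throughout is the construction of $G_\tau$: it must carry both the partial-boundary support that the traditional approach obtains through Carleman inequalities, \emph{and} the sharp $L^p$ mapping properties that the $L^{n/2}$ hypothesis demands --- it is this dual role, not previously achieved in the partial-data CTA setting, that replaces and sharpens the estimates of Kenig--Sj\"ostrand--Uhlmann \cite{ksu}.
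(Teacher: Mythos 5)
Your high-level architecture matches the paper: conformal reduction to $dy_1^2\oplus g_0$, CGO solutions vanishing on $\Gamma_\pm$ so that Green's identity kills the boundary term outside $\Gamma_N$, sharp $L^{p'}\to L^{p}$ bounds ($p=\frac{2n}{n-2}$) to absorb the $L^{n/2}$ potential, and finally stationary phase plus a geodesic ray transform on the simple factor. You also correctly single out the crux: a ``Green's function'' $G_\tau$ that \emph{simultaneously} produces the vanishing on $\Gamma_\pm$ and satisfies the Jerison--Kenig-scale $L^p$ estimate.

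However, your proposed construction of $G_\tau$ has a genuine gap. You suggest obtaining the one-sided boundary behaviour by taking ``a one-sided choice of fundamental solution in $y_1$ (Cauchy integral on a half-line).'' Such a truncation produces an operator whose output vanishes on a \emph{flat} slice $\{y_1=\mathrm{const}\}$, but the set $\Gamma_\pm$ sits on the curved hypersurface $\partial\Omega$, locally a graph $y_1=f(y')$ with $f$ nonconstant. Once you straighten this graph by $(y_1,y')\mapsto (x_1=y_1-f(y'),\,y')$ the conjugated operator stops being a Fourier multiplier in $x_1$: the coefficients now depend on $x'$, cross terms $\xi_1\, g_0(\xi',df)$ appear, and the clean contour-integral argument in the dual variable to $x_1$ no longer applies. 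This is precisely where the paper's real work lives: it factors the straightened operator (modulo lower order) as $QJ$ where $J=h\partial_{x_1}+B(x',hD')$ generates a parabolic flow preserving $x_1>0$, builds the large-frequency parametrix $E_\ell=(1-\tilde\rho)\,J^{-1}{\bf 1}_{M_+}J\,\tilde G_+^I$ and a small-frequency elliptic parametrix $E_s$, checks support preservation for both, and then runs Neumann series to upgrade to a genuine inverse. The $L^{p'}\to L^p$ bound survives because the discrepancy $J^{-1}{\bf 1}_{M_-}J$, applied to data supported in $\overline M_+$, gains powers of $h$ (a disjoint-support lemma). None of this follows from the half-line Fourier truncation you describe, and it is not obvious that the naive truncation even preserves the $L^{p'}\to L^p$ estimate in the flat case. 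To close the gap you would need either to reproduce something like the factorization/parabolic-flow construction, or to give an independent proof that the half-line operator (i) still enjoys the Kenig--Ruiz--Sogge-scale bound after truncation and (ii) can be conjugated across the graph-flattening change of variables while retaining both the support property and the estimate.

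A more minor point: you write the CGOs with opposite signs in $y_1$ and the \emph{same} sign in $\varphi$, so the oscillation $e^{-2i\tau\varphi}$ sits in the exponential. The paper instead uses $u_\pm=e^{\pm(\phi+i\psi)/h}(a+\cdots)$ with fully opposite exponents (the exponentials cancel in the product) and puts the $e^{i\lambda(y_1+it)}$ oscillation into the amplitude $a$. Both conventions lead to the same attenuated ray transform on $\Omega_0$ in the limit, so this is only a cosmetic difference, but be aware you cannot literally take $\psi=\varphi$ on $M_0$ as a ``limiting Carleman weight''; the weight here is the linear function $y_1$, and $\psi$ is the conjugate harmonic (polar distance) function.
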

Kenig-Salo \cite{KenSal} was the first to consider partial data type problems on CTA manifolds. The result for \cite{KenSal} is for sufficiently regular potentials whereas the focus of this article is on potentials which are unbounded. Partial data for unbounded potentials was also studied in \cite{chung-tzou} in the Euclidean setting for data on roughly half of the boundary. 

This result is new even in the Euclidean setting and can be seen as a sharp regularity (i.e. $q\in L^{n/2}$) version of the main theorem by Kenig-Sj\"ostrand-Uhlmann in \cite{ksu}. Indeed, consider a bounded smooth domain $\Omega\subset \R^n$, $z_0\in \R^n$ a point not in the closure of the convex hull of $\Omega$, and let $\Gamma_\pm \subset \partial \Omega$ be an open subsets compactly contained in 
$$\{z \in \partial \Omega \mid \pm \nu(z)\cdot (z-z_0) >0\}.$$
Define $\Gamma_D$ and $\Gamma_N$ as before, we have after a change of coordinates $y_1 = \log |z-z_0|$,
\begin{cor}
Let $q_1$ and $q_2$ be $L^{n/2}(\Omega)$ functions such that 
$$\Lambda_{q_1}f \mid_{\Gamma_N} = \Lambda_{q_2}f \mid_{\Gamma_N},\ \ \forall f\in C^\infty_0(\Gamma_D)$$ then $q_1 = q_2$.
\end{cor}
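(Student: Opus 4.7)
The proof proceeds by reducing the corollary to Theorem~\ref{main theorem} through a conformal change of coordinates centred at $z_0$. Introducing polar coordinates $z - z_0 = r\theta$ with $r = |z-z_0| > 0$, $\theta \in S^{n-1}$, and setting $y_1 = \log r$, the map $z \mapsto (y_1,\theta)$ is a smooth diffeomorphism in a neighbourhood of $\overline{\Om}$ (since $z_0\notin\overline{\Om}$), and the Euclidean metric takes the form
\[
dz^2 \;=\; dr^2 + r^2\, d\theta^2 \;=\; e^{2y_1}\bigl(dy_1^2 \oplus g_{S^{n-1}}\bigr).
\]
Hence $\Om$ embeds in the CTA manifold $\R \times S^{n-1}$ with background metric $g := dy_1^2 \oplus g_{S^{n-1}}$ and conformal factor $c = e^{y_1}$. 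The assumption that $z_0$ lies outside the convex hull of $\overline{\Om}$ yields a hyperplane through $z_0$ disjoint from $\overline{\Om}$, which forces the projection $\theta(\overline{\Om})\subset S^{n-1}$ to lie compactly in an open hemisphere. Thickening slightly, one obtains a simple domain $\Om_0\subset S^{n-1}$ (e.g.\ a geodesic ball) with $\overline{\Om}$ compactly contained in $\R\times\Om_0$, matching the geometric setup of the main theorem.

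Next I would transport the Schr\"odinger equation across the conformal change. Using the standard conformal invariance of the conformal Laplacian together with $R_{g_{\mathrm{eucl}}}=0$, one obtains the pointwise identity
\[
c^{(n+2)/2}(-\Delta_{g_{\mathrm{eucl}}} + q)u \;=\; (-\Delta_g + \t q)\bigl(c^{(n-2)/2} u\bigr), \qquad \t q \;=\; c^2 q + \tfrac{n-2}{4(n-1)}R_g.
\]
Since $c$ is smooth and bounded away from $0$ and $\infty$ on $\overline{\Om}$ and $R_g$ is smooth, the map $q\mapsto \t q$ is a bijection $L^{n/2}(\Om)\to L^{n/2}(\Om)$. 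The Dirichlet trace transforms by multiplication by the nonvanishing factor $c^{(n-2)/2}|_{\partial\Om}$, and the Neumann trace by a corresponding smooth nonvanishing factor (the two unit inward normals satisfy $\nu_{\mathrm{eucl}} = c^{-1}\nu_g$ on $\partial\Om$); crucially both factors depend only on $c$ and on $\partial\Om$, not on the potential. Consequently, the hypothesis $\Lambda_{q_1}f|_{\Gamma_N} = \Lambda_{q_2}f|_{\Gamma_N}$ for all $f\in C^\infty_0(\Gamma_D)$ is equivalent to the corresponding equality $\Lambda_{\t q_1}\t f|_{\Gamma_N} = \Lambda_{\t q_2}\t f|_{\Gamma_N}$ for all $\t f\in C^\infty_0(\Gamma_D)$ in the CTA setting.

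To finish, one verifies that the boundary splitting is the correct one: since $\partial_{y_1} = r\partial_r$ and $r\partial_r$ coincides with $z-z_0$ viewed in Euclidean coordinates, a direct computation gives $g(\partial_{y_1},\nu(y)) > 0$ at a boundary point $y$ if and only if $\nu(z)\cdot(z-z_0) > 0$ at the corresponding Euclidean point, so the $\Gamma_\pm$ prescribed in the corollary sit inside $\{\pm g(\partial_{y_1},\nu(y))>0\}$ as required. Theorem~\ref{main theorem} then yields $\t q_1 = \t q_2$, whence $q_1 = q_2$ by inverting the relation $\t q = c^2 q + \tfrac{n-2}{4(n-1)}R_g$. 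The main point that needs care, though routine, is the conformal transformation law of the DN map for $L^{n/2}$ potentials; this is unproblematic because the conformal factor is smooth up to the boundary, so the low-regularity well-posedness theory of \cite{DosKenSal} applies on both sides of the change of variables and the transformation respects the $H^{\half}$--$H^{-\half}$ duality pairing on $\partial\Om$.
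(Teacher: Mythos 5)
Your proposal is correct and takes essentially the same route the paper intends: the statement is presented as an immediate consequence of Theorem~\ref{main theorem} via the change of variables $y_1 = \log|z-z_0|$, under which the Euclidean metric becomes $e^{2y_1}(dy_1^2\oplus g_{S^{n-1}})$ and the convex-hull hypothesis places $\overline\Omega$ over a simple geodesic ball in $S^{n-1}$. You have simply supplied the details the paper leaves implicit — the conformal transformation law for $\Delta+q$ (matching, up to the convention $c\leftrightarrow c^{-2}$, the identity the paper records in Section~5), the invariance of the class $L^{n/2}$, the fact that the DN-map differences transform by smooth nonvanishing factors independent of the potential, and the equivalence $\nu(z)\cdot(z-z_0)>0 \iff g(\partial_{y_1},\nu)>0$.
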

Observe that geometrizing the problem from the Euclidean case to the more general setting of Theorem \ref{main theorem} "linearizes" the $\log$ variable which allows us to adapt the parabolic flow construction of \cite{Chuthesis} to our setting. The price we pay, of course, is that the underlying geometry becomes more involved and the Fourier multiplier Green's function constructed by \cite{su} is no longer suitable. To remedy this difficulty we use instead the Green's function constructed via Fourier series. These were first used in \cite{KenSalUhl} and later in \cite{DosKenSal}.

We will take the Green's function of \cite{KenSalUhl} and transform it into a {\em Dirichlet} Green's function which will be the key to solving our problem. To simplify notations fix throughout this article $p := \frac{2n}{n-2}$ and $p' := \frac{2n}{n+2}$. We write $T:X \rightarrow_{h^m} Y$ to denote $\| T\|_{X\to Y} \leq C h^m$:
\begin{prop}
\label{green's function}
There exists a Green's function
\[G_{\Gamma_\pm}: L^2(\Omega)\to_{h^{-1}} L^2(\Omega),\ G_{\Gamma_\pm} : L^{p'}(\Omega)\to_{h^{-2}} L^p(\Omega).\] 
which resolves the conjugated Laplacian $h^2 e^{\mp y_1/h}\Delta_g e^{\pm y_1/h} G_{\Gamma_\pm} = Id$ in $\Omega$.
Furthermore, $G_{\Gamma_\pm} v\in H^1(\Omega)$ for all $v \in L^{p'}$  and $G_{\Gamma_\pm} v \mid_{{\Gamma_\pm}} = 0$.
\end{prop}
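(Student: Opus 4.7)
The plan is to start from the Green's function $G_0$ constructed by Kenig-Salo-Uhlmann in \cite{KenSalUhl} via Fourier series on the transverse manifold $M_0$, and then correct it by a boundary layer term which kills its trace on $\Gamma_\pm$. Recall that after expanding in an $L^2(M_0)$-orthonormal basis $\{\phi_k\}$ of eigenfunctions of $-\Delta_{g_0}$ with eigenvalues $\{\lambda_k^2\}$, the conjugated operator $h^2 e^{\mp y_1/h}\Delta_g e^{\pm y_1/h}$ reduces (up to the conformal factor and lower-order terms) to a family of second-order Schr\"odinger-type ODEs in $y_1$, one per mode. Explicit inversion of each 1D operator and resummation of the modes yields $G_0$, defined on the full cylinder $\R\times M_0$, which satisfies the semiclassical bounds $L^2\to_{h^{-1}} L^2$ and $L^{p'}\to_{h^{-2}} L^p$ together with the $H^1$ regularity; see \cite{KenSalUhl} and \cite{DosKenSal}.

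The second step is to write $G_{\Gamma_\pm} = G_0 + R_\pm$ where $R_\pm v$ solves the homogeneous conjugated equation in $\Omega$ with $R_\pm v\mid_{\Gamma_\pm} = -G_0 v\mid_{\Gamma_\pm}$. The sign condition $\pm g(\partial_{y_1}, \nu) > 0$ guarantees that $\Gamma_\pm$ is everywhere transverse to $\partial_{y_1}$, so this partial-data boundary value problem can be solved by evolving data along the $\partial_{y_1}$ direction. Mode by mode the ODE has one branch that decays and one that grows in $y_1$; picking the decaying branch and matching the prescribed value on the $\Gamma_\pm$ side yields $R_\pm v$ uniquely. This is precisely the parabolic flow construction of \cite{Chuthesis}, which as explained in the introduction becomes directly accessible here because the change of variable $y_1=\log|z-z_0|$ linearizes the Carleman weight into a simple exponential factor compatible with the Fourier series in $M_0$.

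The main obstacle is to propagate the sharp semiclassical $L^p$ estimates through the corrector $R_\pm$. One needs a semiclassical trace inequality for $G_0 v$ on $\Gamma_\pm$ compatible with the $L^{p'}\to L^p$ bound of $G_0$, together with a boundary-to-interior estimate for the parabolic flow at the same scale $h^{-2}$. Both should follow by returning to the explicit mode-by-mode representation: each 1D Green's function on mode $k$ admits an $L^{p'}(y_1)\to L^p(y_1)$ bound and a trace bound into a weighted $\ell^2$-space in $k$, and summing over modes using Minkowski and the dyadic decomposition in $\lambda_k$ familiar from \cite{DosKenSal} closes the estimate. Once $R_\pm$ is constructed with the required bounds, the interior regularity $R_\pm v\in H^1(\Omega)$ and the identity $h^2 e^{\mp y_1/h}\Delta_g e^{\pm y_1/h} G_{\Gamma_\pm} = \mathrm{Id}$ follow immediately from elliptic regularity applied to the homogeneous equation satisfied by $R_\pm v$. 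The hardest technical point is tracking the $h$-dependence of the trace step without losing the factor $h^{-2}$, which is what dictates the sharp regularity threshold $L^{n/2}$ in Theorem \ref{main theorem}.
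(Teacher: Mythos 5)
Your proposal takes a genuinely different route from the paper, and while the overall strategy (start from the full-cylinder Green's function of Kenig--Salo--Uhlmann and correct it to obtain the Dirichlet condition on $\Gamma_\pm$) is the same, the specific decomposition and the argument you sketch have a gap that would stop the proof.

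You propose an \emph{additive} correction $G_{\Gamma_\pm}=G_0+R_\pm$, where $R_\pm v$ solves the homogeneous conjugated equation with boundary value $-G_0 v|_{\Gamma_\pm}$, and you claim $R_\pm$ can be built mode by mode by picking the decaying branch of a family of decoupled ODEs in $y_1$. The trouble is that $\Gamma_\pm$ is \emph{not} a level set of $y_1$: each connected piece is a graph $\{y_1=f(y')\}$ with $f$ nonconstant. After the change of coordinates $x_1=y_1-f(y')$ that flattens the graph (which is exactly how the paper proceeds), the conjugated operator picks up cross terms involving $g_0(\xi',df)$ and the transverse Fourier modes no longer decouple; see \eqref{quantize the laplacian}. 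The object that replaces your ``mode-by-mode ODE'' is the genuinely pseudodifferential parabolic operator $J=hD_1+B(x',hD')$ from Section~3, and its inverse is not a diagonal Fourier multiplier in the $M_0$-eigenbasis. Consequently your planned closure of the $L^{p'}\to L^p$ bound --- trace inequality on $\Gamma_\pm$, boundary-to-interior estimate for each mode, Minkowski plus dyadic decomposition in $\lambda_k$ --- does not have a mode-by-mode reduction to fall back on. You flag this as ``the hardest technical point'' but don't supply an argument, and the path you indicate would fail for curved $\Gamma_\pm$.

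The paper avoids the boundary-layer/trace route entirely. Instead of $G_0+R_\pm$ it performs a \emph{multiplicative} surgery on $G_0$: after flattening, it factors $(1+|df|^2)^{-1}h^2\tilde\Delta_+\approx QJ+\t A_0$ (a Chung-type decomposition, \eqref{operator factorization}), and sets the large-frequency parametrix to be $E_\ell=(1-\tilde\rho(x',hD'))\,J^{-1}{\bf 1}_{M_+}J\,\tilde G_+^I$. Here $J^{-1}$ preserves support in $M_+$ by the parabolic flow computation \eqref{support of j^-1}, so $E_\ell$ vanishes on $\{x_1=0\}$ automatically --- there is never a need to estimate a trace of $G_0$ and then invert a boundary value problem. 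At small transverse frequency the square root in the factorization is discontinuous, so the paper supplies a separate elliptic parametrix $E_s$ (Propositions~\ref{small freq parametrix} and~\ref{SmallFrequencySupport}) that also preserves $M_+$-support; your proposal does not notice this small-frequency obstruction. The sum $E_s+E_\ell$ inverts $h^2\tilde\Delta_+$ up to remainders controlled in Proposition~\ref{P_l remainders}, a Neumann series closes the inverse, and a partition of unity in $\Gamma$ (Lemma~\ref{remainder of difference in green}) handles the general case. So: same starting object $G_0$, and the correct recognition that the sign condition on $\Gamma_\pm$ is what makes a one-sided evolution possible, but the additive-corrector/trace-estimate plan you outline is not the mechanism used, and the step where the modes are supposed to decouple is precisely where the argument would break.
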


The theory and methods developed in this article along with earlier work in this direction \cite{chung-tzou} has a central theme: Given any Green's function for the conjugated Laplacian with suitable semiclassical $L^p$ estimates, there is a systematic way to "upgrade" it to one which obeys the Dirichlet boundary condition while simultaneously preserving the same estimates. One can, of course, adapt similar methods of this article and \cite{chung-tzou} to obtain different types of boundary conditions (e.g. Neumann, Robin, etc.) and different types of conjugation with elliptic operators (Dirac, bi-Laplace, etc). As these Dirichlet Greens functions are the pivotal piece in many inverse problems \cite{ksu, KruUhl32, Chuthesis}, unique continuations and Carleman estimates \cite{jerome1, jerome2, jerome3, jerome4, jerome5}, we anticipate that their scope of application extends beyond Calder\'on problems. The explicit nature of their construction, bypassing the traditional route of abstract functional analysis machinery (see \cite{ksu}), also gives hope for the possibility of CGO based numerical reconstruction algorithms with partial data in the spirit of \cite{Nac, num1, num3}.

In Section \ref{psido} we review the $\Psi$DO calculus which we will use. In Section 3 we construct solutions to a parabolic flow in phase space - this will provide us with the Dirichlet data on the desired portion of the boundary. In Section 4 we use this flow to construct the Green's function which is the key piece for proving Theorem \ref{main theorem}. In Section 5 we construct the CGO using these Green's functions and finally employ them in Section 6 to prove Theorem \ref{main theorem}.

\end{section}

\begin{section}{Elementary Semiclassical $\Psi$DO theory on CTA Manifolds}\label{psido}
We review some notions about semiclassical pseudodifferential operators which will be relevant to the rest of this article. Throughout this article we will use the Weyl quantization to produce operators acting on sections of the half-density bundle ${\bf \Omega}^{1/2}(M)$, which we identify with the trivial line bundle via the volume form.  This has the advantage that symbols of semiclassical operators in $\Psi^k(M)$ are defined up to $h^2 S^{k-2}(M)$. Proofs of the results in this section are contained are omitted as they involve application of standard techniques described in articles such as \cite{chung-tzou}, \cite{wong}, and \cite{zworski semiclassical}.  
\begin{subsection}{Semiclassical Sobolev Spaces}

We use semiclassical Sobolev spaces with the norm $\|u\|_{W^{k,r}(M)} := \| \langle hD\rangle^k  u\|_{L^r}$,
which, for $k\in \N$ is equivalent to the one involving derivatives $ \sum\limits_{|\alpha |\leq k} \| (hD)^\alpha u\|^r_{L^r}$.

Let $M = \R\times M_0$ where $M_0$ is a closed compact manifold with metric $g_0$ and consider the metric $dx_1^2 \oplus g_0$ which makes $M$ a transversally anisotropic manifold (\cite{dksu}). Denote the elements of $M$ by $(x_1, x')$. The cotangent bundle of $M$ has a natural splitting $T^*M = T^*\R \oplus T^*M_0$ whose elements we write as $(\xi_1,\xi')$.
We define the mixed Sobolev norms for $u \in C^{\infty}_c(M)$ by 
$$\|u\|_{W^{k,r}(M_0)W^{\ell, r}(M)} := \| \langle hD'\rangle^k \langle hD\rangle^\ell u\|_{L^r}$$
and the space $W^{k,r}(M_0)W^{\ell, r}(M)$ by completion. To simplify, we drop the $M_0$ and $M$ in the parentheses and use the notation that the first $W^{k,r}$ denotes action by 
$\langle hD'\rangle^k$ and the second $W^{\ell,r}$ denotes action by $\langle hD\rangle^\ell$. Note that if $k \geq 0$, 
\begin{eqnarray}\label{mixed space embedding}W^{-k,r} W^{\ell,r } \subset W^{\ell-k, r}(M).\end{eqnarray} 

In addition to H\"ormander symbols $S^{\ell}_1 (M)$, we will also consider symbols in the class $S^k_0(M)$ which do not decay when differentiated with respect to $\xi$:
\[|\partial^\alpha_{x}\partial^\beta_{\xi} a(x,\xi) |\leq C_{\alpha,\beta} |\xi|^k\]

We denote by the symbol space $S^k_1(M_0)S^\ell_j(M)$ by product symbols of the form $ba(x',\xi)$ where $b(x',\xi')\in S^k_{1}(M_0)$ and $a(x',\xi)\in S^\ell_j(M)$ for $j =0,1$. Again, to simplify notation we will drop the $M_0$ and $M$ and just write $S^k_1S^\ell_j$.

Boundedness of quantization of symbols $a\in S^0_1(M)$ are given by the Calder\'{o}n-Vaillancourt estimate (semiclassical version): for all $1<r<\infty$ and $h>0$ sufficiently small there exists a constant $k(n)$ such that
\begin{eqnarray}\label{semiclassical calderonvaillancourt}\|a(x,hD) u\|_{L^r} \leq \sum\limits_{|\alpha|,|\beta| \leq k(n)} p_{\alpha,\beta}(a) \|u\|_{L^r} + C\sqrt{h}\|u\|_{L^r}.\end{eqnarray}
where $p_{\alpha,\beta}(a) := \sup\limits_{x,\xi} |\partial_x^\alpha \partial_\xi^\beta a(x,\xi)| \langle \xi\rangle^{|\beta|}$. In what follows, if $n$ is the dimension let $k(n)$ be the smallest integer for which the estimate \eqref{semiclassical calderonvaillancourt} is true.

We have the following mapping properties for operators with symbols in $S^{k}_1 S^{-\ell}_1 \cup S^{k}_1 S^{-k(n) -\ell}_0$ (see \cite{chung-tzou}).
\begin{prop}
\label{sobolev mapping}
If $b(x',\xi')\in S^{k}_1$ and $a(x',\xi) \in S^{\ell}_1 \cup S^{-k(n)+\ell}_0$ then the quantization of the product symbol $ab(x'\xi)$ maps $W^{m,r} W^{l,r} \to_{h^0} W^{m-k,r} W^{l - \ell,r}$.

\end{prop}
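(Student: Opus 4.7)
The plan is to reduce the claimed mapping property to the uniform-in-$h$ $L^r$-boundedness of a single conjugated pseudodifferential operator, and then invoke the semiclassical Calder\'on-Vaillancourt estimate \eqref{semiclassical calderonvaillancourt}. By the very definition of the mixed semiclassical Sobolev norms, the desired bound $\|\Op(ab)u\|_{W^{m-k,r}W^{l-\ell,r}} \leq C \|u\|_{W^{m,r}W^{l,r}}$ is equivalent, after setting $w := \langle hD'\rangle^m\langle hD\rangle^l u$, to the statement that
\[
T \;:=\; \langle hD'\rangle^{m-k}\,\langle hD\rangle^{l-\ell}\,\Op(ab)\,\langle hD\rangle^{-l}\,\langle hD'\rangle^{-m}
\]
is bounded $L^r \to L^r$ uniformly in $h$. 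So the first step is simply to perform this reduction, using that $\langle hD'\rangle^{\bullet}$ and $\langle hD\rangle^{\bullet}$ are order-preserving isomorphisms between the relevant scales.

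The second step is to identify the Weyl symbol of $T$ through the semiclassical product-symbol calculus. Each conjugating factor is the quantization of a symbol depending only on cotangent variables (either $\xi'$ alone or $\xi$), so the Moyal product with $\Op(ab)$ admits an asymptotic expansion whose leading term is the pointwise product
\[
\sigma_0(T)(x,\xi) \;=\; \langle \xi'\rangle^{-k}\,\langle\xi\rangle^{-\ell}\, b(x',\xi')\, a(x,\xi),
\]
with all higher-order corrections carrying additional powers of $h$. A direct inspection then shows $\sigma_0(T)\in S^0_1 S^0_1$ when $a\in S^\ell_1$, and $\sigma_0(T)\in S^0_1 S^{-k(n)}_0$ when $a\in S^{-k(n)+\ell}_0$; the correction terms fall into the same product classes with an extra factor of $h$ and are therefore harmless.

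Finally I would apply \eqref{semiclassical calderonvaillancourt}. In the $S^0_1 S^0_1$ case the joint symbol sits in $S^0_1(M)$ and Calder\'on-Vaillancourt gives $L^r$-boundedness directly. In the $S^0_1 S^{-k(n)}_0$ case, the extra decay $\langle\xi\rangle^{-k(n)}$ exactly absorbs the weight $\langle\xi\rangle^{|\beta|}$ appearing in the seminorms $p_{\alpha,\beta}$ for $|\beta|\leq k(n)$, so the finite list of seminorms required by \eqref{semiclassical calderonvaillancourt} is bounded uniformly in $h$ and $L^r$-boundedness again follows.

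The main obstacle, as I see it, lies in the bookkeeping of the second step: verifying that composition truly preserves the mixed product-symbol classes $S^{\bullet}_1 S^{\bullet}_j$ and that the remainders in the Moyal expansion remain in the same classes with an $h$-gain, uniformly in the Calder\'on-Vaillancourt seminorms. This is the point where the choice of $k(n)$ as the number of derivatives required by \eqref{semiclassical calderonvaillancourt} is used in an essential way to calibrate the decay needed in the $S_0$ component. The verification is technical but standard and parallels the development carried out in \cite{chung-tzou} and \cite{zworski semiclassical}.
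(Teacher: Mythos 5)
Your reduction of the claim to the uniform $L^r$-boundedness of the conjugated operator
$T=\langle hD'\rangle^{m-k}\langle hD\rangle^{l-\ell}\Op(ab)\langle hD\rangle^{-l}\langle hD'\rangle^{-m}$
is sound, and your treatment of the $a\in S^{-k(n)+\ell}_0$ branch is correct: after the conjugation the factor $\langle\xi\rangle^{-k(n)}$ in the $a$-part absorbs the weight $\langle\xi\rangle^{|\beta|}$ in $p_{\alpha,\beta}$ for $|\beta|\le k(n)$, so the finitely many Calder\'on--Vaillancourt seminorms in \eqref{semiclassical calderonvaillancourt} are finite and that branch goes through. (The paper omits its own proof, citing \cite{chung-tzou,wong,zworski semiclassical}, so there is no paper argument to compare against.)

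The gap is in the $a\in S^{\ell}_1$ branch, where you assert that ``the joint symbol sits in $S^0_1(M)$ and Calder\'on--Vaillancourt gives $L^r$-boundedness directly.'' That inclusion is false. If $b_0:=\langle\xi'\rangle^{-k}b\in S^0_1(M_0)$ and $a_0:=\langle\xi\rangle^{-\ell}a\in S^0_1(M)$, then $\partial_{\xi'}^{\beta}(b_0a_0)$ contains the term $(\partial_{\xi'}^{\beta}b_0)\,a_0$, whose decay is only $\langle\xi'\rangle^{-|\beta|}$, not $\langle\xi\rangle^{-|\beta|}$. Fixing $\xi'$ bounded and letting $\xi_1\to\infty$ shows $p_{\alpha,\beta}(b_0a_0)=\sup|\partial_x^\alpha\partial_\xi^\beta(b_0a_0)|\langle\xi\rangle^{|\beta|}=\infty$ for $|\beta|>0$, so \eqref{semiclassical calderonvaillancourt} cannot be applied on $M$ to the joint symbol. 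The mixed class $S^0_1(M_0)S^0_1(M)$ is genuinely larger than $S^0_1(M)$, and this is precisely why the paper sets up the mixed classes $S^k_1S^\ell_j$ and the mixed Sobolev scale rather than working inside $S^0_1(M)$. The repair is to factor $T$ (modulo Moyal corrections that each gain a power of $h$ and land in lower-order mixed classes) as a composition of two operators: one with symbol $\langle\xi'\rangle^{-k}b\in S^0_1(M_0)$, which is bounded on $L^r(M)$ by applying Calder\'on--Vaillancourt on $M_0$ fiberwise in $x_1$ and then Fubini/Minkowski; and one with symbol $\langle\xi\rangle^{-\ell}a\in S^0_1(M)$, bounded by \eqref{semiclassical calderonvaillancourt} on $M$. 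The commutators and remainders must then be checked to stay in the mixed classes with an $h$-gain, which is the bookkeeping you flag at the end — but the one-shot ``joint symbol in $S^0_1(M)$'' shortcut is not available.
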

Composition of these operators are given by
\begin{prop}
\label{sobolev composition}
If $a\in S^{k_1}_1S_1^{\ell_1} \cup S^{k_1}_1 S^{-k(n) +\ell_1}_0$ and $b\in S^{k_2}_1S_1^{\ell_2} \cup S^{k_2}_1 S^{-k(n) +\ell_2}_0$ then the composition $b(x'hD)a(x',hD)$ is given by
\[ab(x',hD) + \frac{h}{2i}\{a,b\}(x',hD) + h^2 m(x',hD)\]
with the remainder $m(x',hD)$ a bounded map from $W^{k,r}W^{\ell,r} \to W^{k - k_1-k_2,r}W^{\ell-\ell_1-\ell_2,r}$. 
\end{prop}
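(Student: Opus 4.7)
\emph{Proof sketch.} The plan is to compute the Weyl composition via the standard Moyal product expansion and then apply Proposition \ref{sobolev mapping} to the $h^2$-remainder.

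By the semiclassical Moyal expansion for Weyl-quantized operators, one has
\[
b \# a \;=\; ab \;+\; \frac{h}{2i}\{b,a\} \;+\; h^2 m,
\]
where $m(x',\xi)$ is given explicitly by a Taylor remainder integral involving second-order mixed derivatives of $a \otimes b$ in the phase-space variables. Since $b$ is independent of $(x_1,\xi_1)$, only derivatives along the $M_0$ directions of $b$ appear, which keeps the remainder naturally in the product symbol class on $M_0 \times M$.

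The task is then to verify that $m$ lies in a product symbol class to which Proposition \ref{sobolev mapping} applies with orders shifted by $(-k_1-k_2,-\ell_1-\ell_2)$. Each application of the symplectic bidifferential operator $\partial_\xi \partial_y - \partial_\eta \partial_x$ in the Moyal expansion pairs one $\xi$- (or $\xi'$-) derivative with one $x'$-derivative, so after two applications one loses two orders in each of the $x'$- and $\xi$-directions, while respecting the product structure because $b$ depends only on $(x',\xi')$. If $a,b$ both sit in the $S_1$ class, this places $m \in S^{k_1+k_2}_1 S^{\ell_1+\ell_2}_1$; if $a$ is instead in the $S_0$ class the analogous statement for $S_0$ holds, with orders again shifted as claimed. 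The tails away from the diagonal of the defining oscillatory integral are controlled as usual by repeated integration by parts, yielding arbitrarily negative powers of $h^{-1}\langle\xi-\eta\rangle\langle\xi-\zeta\rangle$ which contribute smoothing terms negligible to all orders in $h$.

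The main obstacle is the $S_0$ case: derivatives of $a \in S^{-k(n)+\ell_1}_0$ with respect to $\xi_1$ do not improve decay, so naive remainder estimates gain nothing in the longitudinal direction. The saving point is that only finitely many, say $\leq k(n)$, such derivatives are ever needed to apply the Calder\'on-Vaillancourt bound \eqref{semiclassical calderonvaillancourt}; the $-k(n)$ built into the $S_0$ order precisely affords this room. Combining this observation with Proposition \ref{sobolev mapping} shows that $h^2 m(x',hD)$ maps $W^{k,r}W^{\ell,r}$ to $W^{k-k_1-k_2,r}W^{\ell-\ell_1-\ell_2,r}$ with the required $h^0$ operator-norm bound, completing the proof.
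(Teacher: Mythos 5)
The paper omits the proof of this proposition, referring instead to the standard techniques of the cited references, so I am reviewing your sketch on its own merits rather than against a written argument in the text.

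Your overall structure is sound: expand the Weyl composition via the Moyal/stationary-phase formula, observe that the product structure $S^{k}_1 S^{\ell}_j$ is respected (modulo Leibniz) because the $M_0$-derivatives factor through the tensor decomposition, control the oscillatory tails by integration by parts, and then apply Proposition~\ref{sobolev mapping} to the $h^2$-remainder. However, your "main obstacle" paragraph identifies a problem that does not in fact occur, and your resolution attributes to the $-k(n)$ shift a role it does not play here. Since \emph{both} $a(x',\xi)$ and $b(x',\xi)$ are independent of $x_1$, the longitudinal block $D_{\xi_1}D_{y_1}-D_{x_1}D_{\eta_1}$ of the symplectic bidifferential operator annihilates $b\otimes a$ outright: $D_{y_1}$ kills $a$ and $D_{x_1}$ kills $b$. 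Consequently the full composition formula degenerates to the Moyal product on $T^*M_0$ with $\xi_1$ as a passive parameter, and no $\xi_1$-derivative of the $S_0$-factor of $a$ ever appears in the expansion or in the integral remainder. The concern you raise — that $\xi_1$-differentiations of $a\in S_0$ gain nothing — is therefore moot. The $-k(n)$ built into the $S_0$ order is not there to absorb longitudinal derivatives introduced by the composition; it is the room already required so that the Calder\'on--Vaillancourt estimate \eqref{semiclassical calderonvaillancourt}, and hence Proposition~\ref{sobolev mapping}, applies to $m$ itself, whose $S_0$-factor simply inherits the same order because $S_0$ is closed under differentiation without gain. Making the decoupling observation explicit would both correct the mischaracterization and simplify your argument: it reduces everything to the one-parameter family of $M_0$-Moyal products, for which the product symbol class and order bookkeeping are immediate.
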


\end{subsection}
\end{section}

\begin{section}{Parabolic Equation}

Denote by $M_+ := (0,\infty)\times M_0 \subset M$ and $M_- := (-\infty,0)\times M_0\subset M$. Let $B(x',\xi') \in S^1_1(M_0)$, and define 
\begin{equation}\label{J}
j(x',hD) := h\partial_{x_1} + B(x',hD').
\end{equation}

Estimate \eqref{semiclassical calderonvaillancourt} gives that that $j(x',hD):W^{1,r}(M) \rightarrow L^r(M)$ for $1 < r < \infty$. In this section we follow \cite{chung-tzou} and derive some properties of its inverse.

We assume that the real part of $B(x',\xi')$ obeys the ellipticity condition
\begin{equation}\label{EllipticF}
c\langle \xi' \rangle  \leq \mathrm{Re}B(x',\xi') \leq C\langle \xi' \rangle
\end{equation}
uniformly in $x'$, for some constants $c, C > 0$ which ensures the ellipticity of $j(x,\xi) := i\xi_1 + B(x',\xi')$. 

Unfortunately even with ellipticity the symbol $(i\xi_1 + B)^{-1}$ is not in general in the class $S^{-1}_1(M)$ (but rather in $S^{-1}_0(M)$). We need to assume that there is a first order symbol $i\xi_1 + B_-(x',\xi')$ which is elliptic outside of a compact set, such that
\begin{eqnarray}
\label{complete the square}
(i\xi_1 + B) (i\xi_1+ B_-) = {\mathcal P}(x',\xi) + a_0
\end{eqnarray}
for some second order polynomial (in $\xi)$) ${\mathcal P}(x',\xi)$ which is elliptic outside a compact set and $a_0 \in S^{-\infty}(M_0)$. It was shown in \cite{chung-tzou} that \eqref{complete the square} implies 
\begin{equation}\label{expansion of inverse}
j^{-1}(x',\xi) = (i\xi_1 + B)^{-1} \in  S^0_1 S^{-1}_1 + S^{-\infty} S_{0}^{-1-k(n)} + S_1^1 S^{-2}_1 
\end{equation}
which then implies
\begin{eqnarray}
\label{j parametrix}
 j^{-1}(x',hD) : L^r \to W^{1,r},\ \ 1<r<\infty.
\end{eqnarray}

One useful property is that $j^{-1}(x',hD)$ is equivalent to solving the Cauchy problem for the parabolic flow with initial condition on $x_1 = 0$. Indeed, let $U\subset M_0$ be a coordinate chart and $u$ be a smooth function which is compactly supported in the (infinite) strip $\R \times U$. Identifying $u$ with its pull-back by the coordinate map we can write 
\[
j^{-1}(x',hD) u(x_1,x') = h^{-n}\int_{\R^{n-1}}\int_{\R^{n-1}}e^{\frac{i}{h}(x'-y')\cdot \xi'} \int_{\R}\int_{\R}\frac{e^{\frac{i}{h}(x_1-s)t}u(s,y')}{it + B(\frac{x'+y'}{2},\xi')}  dt ds\, d\xi'dy'.
\]  
The inner integral can be computed using residue theorem to obtain
\[j^{-1}(x',hD)u(x_1,x')  = h^{-n}\int_{\R^{n-1}}\int_{\R^{n-1}}e^{\frac{i}{h}(x'-y')\cdot \xi'}  \int_{-\infty}^{x_1} e^{\frac{s-x_1}{h} B(\frac{x'+y'}{2},\xi')} ds dy'd\xi'.\]
Therefore, $j^{-1}(x',hD) u(x_1,x')\mid_\{x_1 \leq 0\} =0$. A partition of unity argument on the compact manifold $M_0$ shows that this holds for all $u \in C_c^\infty(M_+)$. A density argument allows us to conclude that
\begin{eqnarray}
\label{support of j^-1}
j^{-1}(x',hD)u \in W^{1,r}(M),\ \ j^{-1}(x',hD)u \mid_{x_1 \leq 0} = 0
\end{eqnarray}
if $u\in L^r(M)$ and $u\mid_{M_-} =0$.

Henceforth we will refer to the support property given by \eqref{support of j^-1} as ``preserving support in $M_+$".

Standard semiclassical calculus allows us to turn $j^{-1}(x',hD)$ into an inverse of $j(x',hD)$. First observe that such if $a(x',\xi') \in S^{1}_1( M_0)$ \eqref{expansion of inverse} and Proposition \ref{sobolev composition} yields
{\small\begin{eqnarray}
\label{composing with j^-1}
a(x',hD') j^{-1}(x',hD) = (aj^{-1})(x',hD) +\frac{ h}{2i} (j^{-2} \{a, B\})(x',hD)+ h^2m(x',hD)
\end{eqnarray} }
where $m(x',hD)$ and $ (j^{-2} \{a, B\})(x',hD)$ map $L^{r} \to_{h^0} L^r$.
Using this composition formula and Proposition \ref{sobolev composition} we invert $J:= j(x',hD) + hB_0(x',hD')$:
\begin{prop} 
\label{J inverse}
Let $J := j(x',hD) + hB_0(x',hD)$ for some $B_0(x',\xi') \in S^{0}_1(M_0)$. For $h>0$ sufficiently small there exists $J^{-1}: L^r \to W^{1,r}$ of the form
\[J^{-1}=j^{-1}(x', hD)(1+  h (j^{-1}B_0)(x',hD) + h^2 m_2(x',hD)) \]
and preserving $M_+$ support. 
Here the correction term $m_2$ satisfies $m_2(x',hD): L^r \rightarrow_{h^0} L^r$. \end{prop}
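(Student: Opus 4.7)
We invert $J = j(x',hD) + hB_0(x',hD)$ by a Neumann series in $h$, viewing $hB_0$ as a small perturbation of $j$. The ansatz is to construct a right inverse of the form $j^{-1}(x',hD)$ post-composed with a correction $I + hA_1 + h^2 m_2$ to be determined.

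The first step is to compute $J \circ j^{-1}(x',hD)$ and identify its departure from the identity. For the leading piece $j(x',hD) \circ j^{-1}(x',hD)$, Weyl composition gives the symbol $j \cdot j^{-1} + \frac{h}{2i}\{j, j^{-1}\} + O(h^2) = 1 + O(h^2)$, since $\{j, 1/j\} = 0$ for any smooth $j$; the $O(h^2)$ remainder quantizes to a bounded operator $L^r \to L^r$ uniformly in $h$ via Proposition \ref{sobolev mapping} applied to the mixed symbol classes of \eqref{expansion of inverse}. For the piece $hB_0(x',hD) \circ j^{-1}(x',hD)$, the composition formula \eqref{composing with j^-1} with $a = B_0 \in S^0_1(M_0)$ yields $h(j^{-1}B_0)(x',hD) + h^2 \tilde m$. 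Assembling,
\[
J \circ j^{-1}(x',hD) = I + h(j^{-1}B_0)(x',hD) + h^2 R,
\]
with $R : L^r \to L^r$ bounded uniformly in $h$.

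The second step inverts this $O(h)$-perturbation of the identity. Setting $K := (j^{-1}B_0)(x',hD)$, both $K$ and $R$ are uniformly $L^r$-bounded by Calder\'on--Vaillancourt \eqref{semiclassical calderonvaillancourt} together with Proposition \ref{sobolev mapping}, so for $h$ sufficiently small the Neumann series $\sum_{k \geq 0}(-hK - h^2 R)^k$ converges in $L^r$-operator norm and gives an inverse of the form $I + hA_1 + h^2 m_2$, where $A_1$ collects the leading Neumann correction (built from $K$) and $m_2(x',hD)$ maps $L^r \to L^r$ uniformly in $h$. Composing $j^{-1}(x',hD)$ on the left with this inverse produces
\[
J^{-1} := j^{-1}(x',hD)\bigl(I + hA_1 + h^2 m_2\bigr),
\]
which has the claimed structure. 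The mapping $J^{-1} : L^r \to W^{1,r}$ follows from \eqref{j parametrix} composed with the $L^r$-boundedness of the correction factor, and $M_+$-support preservation is inherited from \eqref{support of j^-1} since $j^{-1}$ is applied last.

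\textbf{Main obstacle.} The principal technical burden is the bookkeeping of Moyal remainders: one must verify that each $O(h^2)$ term in the expansion of $J \circ j^{-1}$ quantizes to an operator bounded $L^r \to L^r$ uniformly in $h$, which requires placing all subsymbols into the mixed classes $S^k_1 S^\ell_j$ covered by Propositions \ref{sobolev mapping} and \ref{sobolev composition}, via the decomposition \eqref{expansion of inverse}. A minor conceptual point is that the Neumann scheme above produces only a right inverse; a symmetric left-inverse construction (or the standard observation that two operators agreeing with the same right-and-left parametrix up to $O(h^\infty)$ coincide) upgrades this to the genuine two-sided $J^{-1}$.
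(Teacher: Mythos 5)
Your overall strategy — compute $J\circ j^{-1}$ via the composition formula \eqref{composing with j^-1}, identify the $O(h)$ and $O(h^2)$ departures from the identity, and invert by Neumann series with error estimates from Propositions \ref{sobolev mapping} and \ref{sobolev composition} — is precisely the argument the paper gestures at ("Using this composition formula and Proposition \ref{sobolev composition} we invert $J$"), so the route matches. Two points deserve attention.

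First, and substantively: your justification of $M_+$-support preservation is insufficient as stated. The identity \eqref{support of j^-1} says $j^{-1}$ maps functions supported in $\overline M_+$ to functions supported in $\overline M_+$; it does \emph{not} say $j^{-1}$ maps arbitrary $L^r$ functions to $M_+$-supported ones. So "$j^{-1}$ is applied last" does not by itself give the conclusion: you also need the inner correction factor $(I+hK+h^2R)^{-1}$ to preserve $M_+$-support, and a general $\Psi$DO on $M$ does not. The fix is short but must be made: $j(x',hD)=h\partial_{x_1}+B(x',hD')$ and $hB_0(x',hD')$ are each $M_+$-support preserving (a differential operator in $x_1$ plus tangential operators), so $J$ is; $j^{-1}$ is by \eqref{support of j^-1}; hence $Jj^{-1}-I=hK+h^2R$ preserves $M_+$-support as an operator identity, each term $(-hK-h^2R)^k$ of the Neumann series does, and the operator-norm limit does because the $M_+$-supported functions form a closed subspace of $L^r$. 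Only then is $J^{-1}=j^{-1}\circ(\text{correction})$ a composition of two $M_+$-support preserving maps. (You could instead argue symbolically that the $\xi_1$-dependence of the symbol $j^{-1}B_0$ enters only through $(i\xi_1+B)^{-1}$, whose poles lie in the upper half-plane, and rerun the residue computation — but the operator-level argument is cleaner because it also handles $R$.)

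Second, a minor discrepancy: as you compute, the Neumann series yields $(I+hK+h^2R)^{-1}=I-hK+O(h^2)$, so the leading correction is $-\bigl(j^{-1}B_0\bigr)(x',hD)$, whereas the proposition is written with a $+$ sign. Your proof (correctly) produces the minus sign, hidden inside the unnamed $A_1$; the paper's stated formula appears to carry a sign typo, and it would be worth flagging this explicitly rather than leaving $A_1$ unspecified, since the sign is determined by the computation.
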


One final consequence of the structure of $J^{-1}$ is the following disjoint support property of \cite{chung-tzou}. We sketch the proof for the convenience of the reader:
\begin{lemma}
\label{disjoint support for J inverse}
Let ${\bf 1}_{M_-} \in L^\infty(M)$ be the indicator function for $M_-$ and let $\epsilon >0$ be small. Then for all $f\in L^r(M)$,
\[
\|J^{-1}{\bf 1}_{M_-}f \|_{W^{1,r}(\{x_1 \geq \epsilon\})} \leq C_\epsilon h^2 \|f\|_{L^r}.
\]
\end{lemma}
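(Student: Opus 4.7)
\medskip

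\noindent\textbf{Proof proposal.} The plan is to exploit the commutator structure of $J$ with cutoffs in $x_1$. Since $B(x',hD')$ and $hB_0(x',hD')$ act only in the transversal variables, they commute with any function of $x_1$, so for any $\chi \in C^\infty_b(\R)$
\[
[J,\chi(x_1)] = [h\partial_{x_1},\chi(x_1)] = h\chi'(x_1).
\]
This gives the formal identity $[\chi,J^{-1}] = J^{-1}[J,\chi]J^{-1} = h\, J^{-1}\chi'(x_1) J^{-1}$, which is rigorous on $L^r$ since $J^{-1}:L^r \to W^{1,r}$ by Proposition \ref{J inverse} and multiplication by $\chi$ preserves $W^{1,r}$.

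Fix cutoffs $\chi,\chi_1 \in C^\infty(\R)$ with $\chi \equiv 1$ on $[\epsilon,\infty)$, $\chi \equiv 0$ on $(-\infty,\epsilon/2]$, $\chi_1 \equiv 1$ on $\mathrm{supp}(\chi')$, and $\chi_1 \equiv 0$ on $(-\infty,\epsilon/4]$. Both cutoffs vanish identically on $M_-$, so $\chi\,{\bf 1}_{M_-} = \chi_1\,{\bf 1}_{M_-} = 0$. Applying the commutator identity once,
\[
\chi J^{-1}{\bf 1}_{M_-} = [\chi,J^{-1}]{\bf 1}_{M_-} = h\, J^{-1}\chi'(x_1) J^{-1}{\bf 1}_{M_-}.
\]
Since $\chi_1 \equiv 1$ on $\mathrm{supp}(\chi')$, I can insert $\chi_1$ for free and iterate:
\[
\chi'(x_1) J^{-1}{\bf 1}_{M_-} = \chi'(x_1)\chi_1(x_1) J^{-1}{\bf 1}_{M_-} = h\,\chi'(x_1) J^{-1}\chi_1'(x_1) J^{-1}{\bf 1}_{M_-},
\]
so altogether $\chi J^{-1}{\bf 1}_{M_-} = h^2\, J^{-1}\chi'(x_1) J^{-1}\chi_1'(x_1) J^{-1}{\bf 1}_{M_-}$.

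To conclude, I use Proposition \ref{J inverse} to control the outermost $J^{-1}$ in the $L^r \to W^{1,r}$ operator norm, and the middle $J^{-1}$'s in the $L^r \to L^r$ norm (which follows from the former by Sobolev embedding of $W^{1,r}$ into $L^r$, or just by $\langle hD\rangle^0$-boundedness). Multiplication by the bounded functions $\chi'(x_1)$, $\chi_1'(x_1)$, and ${\bf 1}_{M_-}$ is bounded on $L^r$ with norm depending only on $\epsilon$, giving
\[
\|\chi J^{-1}{\bf 1}_{M_-} f\|_{W^{1,r}(M)} \leq C_\epsilon h^2 \|f\|_{L^r}.
\]
Since $\chi \equiv 1$ on $\{x_1 \geq \epsilon\}$, the left-hand side dominates $\|J^{-1}{\bf 1}_{M_-}f\|_{W^{1,r}(\{x_1 \geq \epsilon\})}$, proving the lemma. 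There is no substantial obstacle here; the only thing to verify carefully is that the commutator identity $[\chi,J^{-1}] = J^{-1}[J,\chi]J^{-1}$ is valid as an identity of bounded operators on $L^r$, which follows from the mapping $J^{-1}:L^r \to W^{1,r}$ and the fact that $J:W^{1,r} \to L^r$ is bounded (by the Calder\'on--Vaillancourt estimate).
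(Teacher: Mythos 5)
Your proof is correct, but it takes a genuinely different route from the paper's. The paper reduces the bound, after multiplying by a cutoff $\zeta_\epsilon$ supported away from $\overline{M_-}$, to a disjoint-support (pseudo-locality) estimate for $\Psi$DOs of the form $\zeta_\epsilon\, ba(x',hD)\, {\bf 1}_{M_-}$, where $a$, $b$ run over the symbol classes appearing in the expansion \eqref{expansion of inverse} of $j^{-1}$; the $h^2$ gain comes from the rapid off-diagonal decay of the kernel between disjointly supported cutoffs. You instead exploit the transport structure of $J$ in the $x_1$ variable directly: because $B(x',hD')$ and $hB_0(x',hD')$ act only transversally, $[J,\chi(x_1)] = h\chi'(x_1)$, so the commutator identity $[\chi,J^{-1}] = J^{-1}[J,\chi]J^{-1}$ together with $\chi\,{\bf 1}_{M_-} = 0$ manufactures a factor of $h$, and one more iteration with a nested cutoff $\chi_1$ gives $h^2$. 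This is cleaner and more elementary — no symbol decomposition, no appeal to a generic disjoint-support result — and it would even give $O(h^N)$ for any $N$ by iterating further, whereas the paper only needs and claims $O(h^2)$. The one place you should be slightly more careful is the use of $[\chi,J^{-1}] = J^{-1}[J,\chi]J^{-1}$: this requires $J^{-1}$ to be a two-sided inverse (in particular $J^{-1}J = I$ on $W^{1,r}$), not merely the right parametrix that its explicit form in Proposition~\ref{J inverse} produces. This does hold here — the ellipticity condition \eqref{EllipticF} forces $\ker J = 0$ on $W^{1,r}(M)$, since any solution of $Jv = 0$ grows exponentially as $x_1 \to -\infty$ — but it is worth stating, since the boundedness of $J^{-1}:L^r\to W^{1,r}$ alone (which is all you invoked) does not by itself guarantee left invertibility.
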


\begin{proof}
Let $\zeta_{\epsilon}(x_1)$ be a smooth function of one variable such that $\zeta_\epsilon(x_1) = 1$ on $\{x_1 \geq \epsilon \}$ and $\zeta_\epsilon(x_1) = 0$ on an open set containing $\overline M_-$. Then 
\[
\|J^{-1}{\bf 1}_{M_-}f \|_{W^{1,r}(\{x_1 \geq \epsilon\})} \leq \|\zeta_{\epsilon}J^{-1}{\bf 1}_{M_-}f \|_{W^{1,r}(M)}.
\]
Therefore it suffices to show that 
\begin{eqnarray}
\label{gap estimate}
\|\zeta_{\epsilon}J^{-1}{\bf 1}_{M_-}f \|_{W^{1,r}(M)} \leq C_\epsilon h^2 \|f\|_{L^r}.
\end{eqnarray}

From Proposition \ref{J inverse}, we have that 
\[J^{-1}=j^{-1}(x', hD)(1+  h (j^{-1}B_0)(x',hD) + h^2 m_2(x',hD)) \]
We will only show \eqref{gap estimate} for the principal part $\zeta_\epsilon j^{-1}(x',hD) {\bf 1}_{M_-}$ since the lower order terms can be handled using the terms in \ref{J inverse}. Writing $j^{-1}(x',\xi)$ using expansion \eqref{expansion of inverse} we see that the desired estimate is a special case of disjoint support property for operators of the type $\zeta_\epsilon ba(x',hD) {\bf 1}_{M_-} $ for symbols $a$ and $b$ in the suitable symbol class.

\end{proof}

\end{section}

\begin{section}{Dirichlet Green's Function}
In this section we assume that the metric $g$ on $\R \times M_0$ takes the form $dy_1^2 \oplus g_0$ (i.e. no conformal factor). Let $\Omega \subset M = \R\times M_0$ be a smooth bounded domain contained in $I\times M_0$ for some compact interval $I\subset \R$. If $\Gamma_\pm\subset \partial \Omega$ is open and compactly contained in $\{ y\in \partial \Omega \mid \pm g(\partial_{y_1}, \nu(y)) >0\}$, we would like to invert 
\[h^2\Delta_\pm := h^2 e^{\mp y_1/h}\Delta e^{\pm y_1/h},\]
with Dirichlet boundary condition on $\Gamma_\pm$ and have the inverse satisfy good $L^{p'}\to L^p$ estimates. Note that in this geometric setting every connected component of $\Gamma_\pm$ can be expressed as a portion of the graph of a smooth function $\{y_1 = f(y')\}$ with $f\in C^\infty(M_0)$. For the purpose of simplifying notation we will only work with the "$+$" sign and set $\Gamma:=\Gamma_+$. The theory we develop here works equally well for the "$-$" sign.

We begin with the result of \cite{DosKenSal} and \cite{KenSalUhl}. Let $ g= dy_1^2 \oplus g_0$ be the metric on the product manifold $M = \R \times M_0$. Kenig-Salo-Uhlmann in \cite{KenSalUhl} constructed a Green's function $G^{M}_\pm$ solving $e^{\mp y_1/h} h^2\Delta_g e^{\pm y_1/h} G^{M}_\pm = Id$ of the form
\[G^{M}_\pm f = \sum \limits_{l} \int_{-\infty}^\infty \frac{ e^{iy_1 \xi_1}\hat f_l(\xi_1,y')}{ h^2\xi_1^2 \mp 2ih\xi_1 -1 + h^2\lambda_l} d\xi_1\]
where $f_l(y_1,y') := e_l(y')\int_{M_0} e_l f(y_1,\cdot) dy'$, $\Delta_{g_0} e_l = \lambda_l e_l$, and $\hat f_l$ denotes the Fourier transform in the $y_1$ direction. By \cite{DosKenSal} for any compactly supported functions $\chi,\tilde\chi\in C^\infty(\R)$ one has 
\begin{eqnarray}
\label{DosKenSal estimate}
\t\chi(y_1)G^M_\pm \chi(\cdot): L^2 \to_{h^{-1}} H^2,\ \ \ \t\chi(y_1)G^M_\pm\chi(\cdot): L^{p'} \to_{h^{-2}} L^{p}.
\end{eqnarray}

This operator does not satisfy the desired boundary conditions along $\Gamma$ so more work will be needed. To this end we need to derive some finer properties for this operator. In particular, we would like to show that away from the characteristic set of $e^{\mp y_1/h} h^2\Delta_g e^{\pm y_1/h}$ the operator $G^{M}_\pm$ behaves more or less like a $\Psi$DO:
\begin{lemma}
\label{cut up G}
Let $\rho(y',\xi)\in S^{-\infty}$ be any symbol such that $\rho = 1$ in a neighbourhood of the set $\{\xi_1 = 0, |\xi'|_{g(y)} = 1\}$ then 
\begin{eqnarray*}
G^{M}_\pm &=& \Op(S_1^{-2}) + \Op(S_1^0) \rho(hD) G^{M}_\pm =  \Op(S_1^{-2}) +  G^{M}_\pm \rho(hD)\Op(S_1^0)\\
&=&  \Op(S_1^{-2})  +  \Op(S_1^0) \rho(hD) G^{M}_\pm   \rho(hD)\Op(S_1^0).
\end{eqnarray*}
Furthermore, one can write
\[\t\chi(y_1) G_\pm^M\chi(\cdot)  = (\t\chi(y_1) G_\pm^M\chi(\cdot))^c + \Op(S^{-2}_1) \]
where 
$$ (\t\chi(y_1) G_\pm^M\chi(\cdot))^c : L^2 \to_{h^{-1}} H^k,\  (\t \chi(y_1) G_\pm^M\chi(\cdot))^c : L^{p'} \to_{h^{-2}} W^{k,p}$$

\end{lemma}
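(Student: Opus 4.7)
The plan is to build a semiclassical parametrix for $P_\pm := h^2 e^{\mp y_1/h}\Delta_g e^{\pm y_1/h}$ away from its characteristic set and to exploit the identity $P_\pm G^M_\pm = I$. Since $g = dy_1^2 \oplus g_0$, the operator $P_\pm = h^2\partial_{y_1}^2 \pm 2h\partial_{y_1} + 1 + h^2\Delta_{g_0}$ has semiclassical principal symbol $p_\pm(y,\xi) = -\xi_1^2 \pm 2i\xi_1 + 1 - |\xi'|^2_{g_0(y')}$; requiring its real and imaginary parts to vanish gives the characteristic set $\{\xi_1 = 0,\ |\xi'|_{g_0}=1\}$, precisely the set on which $\rho \equiv 1$. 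Hence $q_0 := (1-\rho)/p_\pm \in S^{-2}_1$.

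The standard asymptotic-sum improvement then produces, for any $N$, an operator $Q_N \in \Op(S^{-2}_1)$ with $Q_N P_\pm = (1-\rho)(hD) + h^N R_N$, where $R_N$ is smoothing to a prescribed order. Multiplying on the right by $G^M_\pm$ and using $P_\pm G^M_\pm = I$:
\begin{equation*}
G^M_\pm \;=\; Q_N \;+\; \rho(hD)\, G^M_\pm \;-\; h^N R_N G^M_\pm.
\end{equation*}
For $N$ sufficiently large, combining \eqref{DosKenSal estimate} with the smoothing of $R_N$ and the critical Sobolev embedding shows that $h^N R_N G^M_\pm$ satisfies the same mapping bounds as an $\Op(S^{-2}_1)$ element and can be absorbed, giving the first decomposition (with the $\Op(S^0_1)$ factor taken to be the identity). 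A symmetric right-parametrix $P_\pm \tilde Q_N = (1-\rho)(hD) + h^N \tilde R_N$, composed with $G^M_\pm$ on the left, furnishes the second decomposition. Substituting the second into the first and invoking Proposition \ref{sobolev composition} (which places $\Op(S^0_1)\rho(hD)\Op(S^{-2}_1)$ in $\Op(S^{-\infty}) \subset \Op(S^{-2}_1)$) delivers the third.

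For the last claim I insert the cutoffs into the third decomposition and set
\begin{equation*}
(\tilde\chi\, G^M_\pm\, \chi)^c \;:=\; \tilde\chi\, \Op(S^0_1)\, \rho(hD)\, G^M_\pm\, \rho(hD)\, \Op(S^0_1)\, \chi.
\end{equation*}
The key observation is that $\rho \in S^{-\infty}$ makes $\langle hD\rangle^k \rho(hD)\in \Op(S^{-\infty})$ for every $k$, which is $L^r$-bounded by Calder\'on--Vaillancourt; hence $\rho(hD): L^r \to W^{k,r}$ uniformly in $h$. Chaining the $L^r$-bounded factors $\chi, \Op(S^0_1), \rho(hD)$ on the right, then DKS \eqref{DosKenSal estimate} applied to the middle $G^M_\pm$ (arranging the $\Op(S^0_1)$ symbols to have $y_1$-support inside a slightly wider interval containing $\mathrm{supp}(\chi)\cup\mathrm{supp}(\tilde\chi)$, so that the appropriate $\chi$, $\tilde\chi$-type cutoffs can be placed around $G^M_\pm$), and finally the smoothing $\rho(hD)\,\Op(S^0_1)$ on the left, yields the asserted $L^2 \to_{h^{-1}} H^k$ and $L^{p'} \to_{h^{-2}} W^{k,p}$ bounds. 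The remaining piece $\tilde\chi\, \Op(S^{-2}_1)\, \chi$ stays in $\Op(S^{-2}_1)$ after absorbing the multiplication--$\Psi$DO commutators.

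The main subtlety I anticipate is that $G^M_\pm$ is not itself a literal $\Psi$DO (its spectral-sum ingredient prevents this), so the equalities in the lemma must be read as asserting matching mapping bounds in the sense of Proposition \ref{sobolev mapping} rather than identities in the pseudodifferential algebra — this is consistent with the informal convention adopted throughout Section \ref{psido}.
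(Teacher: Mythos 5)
Your proposal is correct and follows the same strategy as the paper: recognize that $P_\pm = h^2 e^{\mp y_1/h}\Delta_g e^{\pm y_1/h}$ is elliptic off $\{\xi_1=0,\ |\xi'|_{g_0}=1\}$, build left and right semiclassical parametrices microlocalized to the complement of $\mathrm{supp}\,\rho$, and hit $P_\pm G^M_\pm = G^M_\pm P_\pm = I$ with them to obtain the three decompositions; then insert the cutoffs to isolate a centre piece whose bounds come from \eqref{DosKenSal estimate} together with the $L^r$-boundedness of the smoothing factor $\rho(hD)$. The only genuine difference is cosmetic: the paper introduces an auxiliary $\tilde\rho\in S^{-\infty}$ with $\tilde\rho\equiv 1$ on $\mathrm{supp}\,\rho$ and simply sets $(\tilde\chi G^M_\pm\chi)^c := \tilde\rho(hD)\,\tilde\chi G^M_\pm\chi$, pushing the remaining $(1-\tilde\rho)(hD)\tilde\chi G^M_\pm\chi$ into $\Op(S^{-2}_1)$ by substituting the third identity and using disjointness of $\mathrm{supp}(1-\tilde\rho)$ and $\mathrm{supp}\,\rho$, whereas you take the centre piece to be the full product $\tilde\chi\,\Op(S^0_1)\rho(hD)G^M_\pm\rho(hD)\Op(S^0_1)\chi$, which then forces you to arrange the $y_1$-supports of the $\Op(S^0_1)$ factors so that DKS can still be applied — a workable but slightly heavier route. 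Your closing remark about the meaning of the equalities is also consistent with how the paper treats them: the $h^N R_N G^M_\pm$-type remainders are absorbed on the strength of their mapping properties (uniform $L^r\to W^{2,r}$ bounds), not because they lie literally in the pseudodifferential algebra, and this is how the decompositions are subsequently invoked.
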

\begin{proof}
The first statement comes from ellipticity of $e^{\mp y_1/h} h^2\Delta_g e^{\pm y_1/h}$ away from the support of the symbol $\rho$ and one can construct both left and right semi-classical parametrix.

For the last statement, choose $\rho,\tilde \rho \in S^{-\infty}(M)$ compactly supported on each fiber such that $\rho = 1$ is supported in a compact neighbourhood of the characteristic set and $\t \rho = 1$ on $\supp(\rho)$. We can write 
\[\t\chi G^M_\pm \chi = \t\rho(hD)  \t\chi G^M_\pm \chi + (1-\t \rho(hD))\t\chi G^M_\pm \chi.\]
For the first term, setting $(\t\chi G^M_\pm t\chi )^c :=  \t\rho(hD)  \t\chi G^M_\pm t\chi$ we have by \eqref{DosKenSal estimate}
$$ (\chi(y_1) G_\pm^M\chi(\cdot))^c : L^2 \to_{h^{-1}} H^k,\  (\chi(y_1) G_\pm^M\chi(\cdot))^c : L^{p'} \to_{h^{-2}} W^{k,p}.$$
Substituting $G^M_\pm =   \Op(S_1^{-2})  +  \Op(S_1^0) \rho(hD) G^{M}_\pm   \rho(hD)\Op(S_1^0)$ into the second term completes the proof.
\end{proof}
Another useful statement about the microlocal support of the Green's function is the following 
\begin{lemma}
\label{microlocal wf}
If the support of $a \in S^{1}_0(M)$ is disjoint from the characteristic set $\{\xi_1 = 0, |\xi'| = 1\}$ then 
\[ \|a(y,hD) \t\chi(y_1) G_\pm^M\chi(\cdot) \|_{ L^2\to L^2} \leq C,\ \ \|a(y,hD) \t\chi(y_1) G_\pm^M\chi(\cdot) \|_{ L^{p'}\to L^2} \leq Ch^{-1}\]
 for any compactly supported function $\chi,\tilde\chi \in C^\infty_0(\R)$.

\end{lemma}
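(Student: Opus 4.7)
The plan is to exploit the decomposition
\[ G^M_\pm = E_{-2} + E_0\,\rho(hD)\,G^M_\pm\,\rho(hD)\,E_0', \qquad E_{-2}\in\Op(S^{-2}_1),\ E_0,E_0'\in\Op(S^0_1), \]
from Lemma \ref{cut up G}, where the microlocal cutoff $\rho\in S^{-\infty}$ is supported in an arbitrarily small neighborhood of the characteristic set $\{\xi_1=0,\,|\xi'|_g=1\}$. I would fix $\rho$ so that $\supp(\rho)\cap\supp(a)=\emptyset$, which yields a splitting $a(y,hD)\tilde\chi G^M_\pm\chi = T_1+T_2$ with $T_1:=a(y,hD)\tilde\chi E_{-2}\chi$ built from an elliptic parametrix, and $T_2$ collecting the microlocally characteristic piece.

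For $T_1$ I would chain semiclassical mapping properties from Section \ref{psido}: $E_{-2}\in\Op(S^{-2}_1)$ maps $L^2\to W^{1,2}$ with norm $O(1)$ and $L^{p'}\to W^{2,p'}$ with norm $O(1)$; the semiclassical Sobolev embedding $W^{2,p'}\hookrightarrow W^{1,2}$ costs an extra factor of $h^{-1}$ coming from $1/p'-1/2 = 1/n$; and finally $a(y,hD)\tilde\chi:W^{1,2}\to L^2$ with norm $O(1)$. Composing gives $T_1:L^2\to L^2$ with norm $O(1)$ and $T_1:L^{p'}\to L^2$ with norm $O(h^{-1})$.

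For $T_2=a(y,hD)\tilde\chi E_0\rho(hD)\,G^M_\pm\,\rho(hD)E_0'\chi$, the crucial step is to show that the prefix $a(y,hD)\tilde\chi(y_1)E_0\rho(hD)$ has operator norm $O(h^N)$ on any $L^r$ for every $N$. Indeed its full Weyl symbol $a\#(\tilde\chi\,\sigma(E_0)\,\rho)$ has each term of the Moyal expansion vanish identically: the leading term $a\cdot\tilde\chi\sigma(E_0)\rho$ is zero by disjoint supports, and every higher bracket either retains the factor $\rho$ or produces a $\xi$-derivative $\partial_\xi^\alpha\rho$ whose support still lies inside $\supp(\rho)$, which is then multiplied against $\partial^{\alpha'}_{y,\xi} a$ supported inside $\supp(a)$, giving zero. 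Combining this prefix smallness with the \eqref{DosKenSal estimate} bounds applied to $\tilde\chi''G^M_\pm\chi''$ — after inserting auxiliary cutoffs $\tilde\chi'',\chi''$ equal to $1$ on the relevant supports, with the spurious off-diagonal terms being $O(h^\infty)$ by a disjoint-support argument in the style of Lemma \ref{disjoint support for J inverse} — yields $T_2:L^2\to L^2$ with norm $O(h^{N-1})$ and $T_2:L^{p'}\to L^2$ with norm $O(h^{N-2})$, better than required for any $N\geq 2$.

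The main obstacle is to rigorously convert the termwise vanishing of the Moyal expansion into the claimed $O(h^N)$ operator-norm bound on the prefix, and to organize the auxiliary cutoffs around $G^M_\pm$ so that \eqref{DosKenSal estimate} genuinely applies. The first is handled by the stationary-phase remainder formula for Weyl composition, once one verifies that the intermediate factor $\tilde\chi(y_1)\sigma(E_0)\in S^0_1$ does not reintroduce symbol support outside $\supp(\rho)$ — which it does not, since $\tilde\chi$ is $\xi$-independent and multiplication preserves the $\xi$-support of $\rho$. The second is a routine application of the disjoint-support machinery already developed in Section \ref{psido}.
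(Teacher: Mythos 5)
Your proof follows the same core strategy as the paper's: decompose $\tilde\chi G^M_\pm\chi$ via Lemma \ref{cut up G}, then exploit the disjointness of $\supp(a)$ from the microlocal cutoff $\rho$ that localizes the non-elliptic part of $G^M_\pm$. The paper organizes the bookkeeping slightly differently. It first inserts a second cutoff $\tilde\rho$ (with $\tilde\rho = 1$ on $\supp\rho$ and $\supp\tilde\rho\cap\supp(a) = \emptyset$), disposes of the $a(y,hD)\tilde\rho$ piece as $O(h^\infty)$ using the right-parametrix form of Lemma \ref{cut up G}, and then substitutes the sandwiched form into $a(1-\tilde\rho)\tilde\chi G^M_\pm\chi$, where the characteristic term dies because $(1-\tilde\rho)\rho \equiv 0$. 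You bypass the auxiliary $\tilde\rho$, substitute the sandwiched form immediately, and use the single disjoint-support fact that $a\#\bigl(\tilde\chi\,\sigma(E_0)\#\rho\bigr)$ vanishes to infinite order; this is a touch more economical (one disjoint-support step instead of two). Two technical caveats, both equally present in the paper's own argument: (1) your chain for $T_1$ (and the paper's term $a(1-\tilde\rho)\tilde\chi\,\Op(S^{-2}_1)\chi$) requires $L^2$-boundedness of the composition of $a\in S^1_0$ with an operator of order $-1$ or lower --- a genuine Calder\'on--Vaillancourt statement on $L^2$ for symbols of type $(0,0)$, not covered by the stated estimate \eqref{semiclassical calderonvaillancourt} nor by Proposition \ref{sobolev mapping}, which require $S^\bullet_1$ or $S^{-k(n)+\bullet}_0$; (2) applying \eqref{DosKenSal estimate} to the inner $G^M_\pm$ in your $T_2$ does require the auxiliary compactly supported cutoffs and an off-diagonal argument, which you flag and which is indeed routine given the kernel decay of $\Op(S^0_1)\rho(hD)$. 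With those two standard points supplied, the argument is correct and matches the paper's in substance.
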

\begin{proof}
In the proof of Lemma \ref{cut up G} choose $\rho,\t\rho \in S^{-\infty}(M)$ so that their supports are disjoint from $a(y,\xi)$. 
The second statement comes directly from using the first statement to write
\begin{eqnarray*}
a(y,hD) \t\rho(y',hD) \tilde \chi G^M_{\pm} \chi = a(y,hD) \t\rho(y',hD) \tilde \chi  \Big(\Op(S_1^{-2}) +  G^{\R\times M_0}_\pm\Op(S_1^0(M)) \rho(y', hD)\Big) \chi.
\end{eqnarray*}
This clearly maps $H^{-k}(M) \to H^k(M)$ for all $k$ with decay $h^\infty$. Therefore it suffices to analyze the mapping properties of $a(x,hD) (1-\t\rho(x',hD)) \tilde \chi G^M_{\pm} \chi$. Substituting 
$$G^M_\pm =   \Op(S_1^{-2})  +  \Op(S_1^0) \rho(hD) G^{M}_\pm   \rho(hD)\Op(S_1^0)$$ into this term completes the proof.
\end{proof}

In order to deal with the fact that portions of $\partial \Omega$ which are described by graphs of smooth functions $y_1 = f(y')$, we will consider portions of the boundary which are straight in the coordinate given by
\begin{eqnarray}
\label{change of var}
\gamma : (y_1,y') \mapsto (x_1 , x') = (y_1 - f(y'), y').
\end{eqnarray}
In these coordinates, the push-forward of the conjugated Laplacian $h^2\t\Delta_+:= \gamma_*(e^{- y_1/h}h^2 \Delta_g e^{ y_1/h})$ is
{\Small \begin{eqnarray}
\label{quantize the laplacian}
h^2 \tilde\Delta_+ 
= (1+ |df|^2) \Op\Big(\xi_1^2 - 2\xi_1\frac{ i-g_0(\xi',df)+ hi F}{1 + |df|^2} - \frac{1- |\xi'|^2+hi \xi'(K)}{1+ |df|^2}\Big) + h^2 \Op(S^0_1(M_0))
\end{eqnarray}}
for some real valued $F\in C^\infty(M_0)$ and $K\in C^\infty(M_0; TM_0)$. Observe that $\t G_+^M := \gamma_* G^M_+ $ is a Green's function for $h^2\t\Delta_+$. If $I\subset \R$ is a compact interval, choose $\tilde\chi,\chi\in C^\infty_0(M)$ which is equal to $1$ on $I\times M_0$ with $\tilde\chi \chi = \chi$. We can also require that $(\gamma_*\t \chi)(x_1,x')$ is a function of $x_1$ only so that 
$$[\Op(S^k_0(M_0),(\gamma_* \t\chi) ] = [\Op(S^k_0(M_0),(\gamma_* \chi) ] = 0.$$
Define
\begin{eqnarray}
\label{green on M}
G^I_+ := \t\chi G^M_+ \chi,\ \ \ \t G^I_+ := \gamma_* G^I_+
\end{eqnarray} 
As a consequence of Lemma \ref{cut up G} we have that 
\begin{eqnarray}
\label{compact green estimate}
\t G^I_+ = (\t G^I_+)^c + \Op(S^{-2}_1),\ \ G^I_+ = ( G^I_+)^c + \Op(S^{-2}_1) 
\end{eqnarray}
where 
$$ (G^I_+)^c, (\t G^I_+)^c: L^2 \to_{h^{-1}} H^k,\  (G^I_+)^c, (\t G^I_+)^c: L^{p'} \to_{h^{-2}} W^{k,p}.$$

\begin{subsection}{Decomposition of $\t\Delta_+$}
It was observed in \cite{Chuthesis} that the principal symbol of $\frac{1}{1+|df|^2}h^2\tilde \Delta_+$ factors formally as
{\tiny\begin{eqnarray*}\xi_1^2 -2\xi_1\frac{(i-g_0(df,\xi')-i hF)}{ 1+ |df|^2} -\frac{(1- |\xi'|^2 -ih\xi'(K))}{1+|df|^2}&=& \left(\xi_1 - i\left( \frac{(1 + ig_0(\xi',df) + hF)-r_0}{1+|df|^2}\right)\right)\times\\&&\left(\xi_1 - i\left( \frac{(1 + ig_0( \xi',df)+hF)+r_0}{1+|df|^2}\right)\right)
\end{eqnarray*}} 
where 
\begin{eqnarray}
\label{r_0}
r_0 := \Big({(1 + i g_0(\xi',df) +hF)^2 - (1-|\xi'|^2 - ih \xi'(K))(1+ |df|^2)}\Big)^{1/2}
\end{eqnarray}
is the standard branch of the square root. To avoid the discontinuity of the square root we make the following modification:\\ We see from the argument of the square-root that the (standard) branch cut occurs on the set
{\small $$\{2(1+hF)g_0(df,\xi') + h(1+|df|^2)\xi'(K) = 0\}\cap \{
(1+hF)^2 \leq (1-|\xi'|^2)(1+|df|^2) + h^2 \big( \frac{(1+|df|^2) \xi'(K)}{2(1+hF)}\big)^2\}.$$}

To avoid this set, observe that for all $\delta >0$ there exists an $\epsilon >0$ and $h_0>0$ such that if $2(1+hF)g_0(df,\xi') + h(1+|df|^2)\xi'(K) = 0$ and $|\xi'|^2 \geq \sup\limits_{x\in M_0}\frac{|df|^2}{1+|df|^2}+\delta$ then
\[ (1+hF)^2 \geq \epsilon + (1-|\xi'|^2)(1+|df|^2) + h^2 \big( \frac{(1+|df|^2) \xi'(K)}{2(1+hF)}\big)^2,\ \ \forall h< h_0\]
Therefore we choose constants $0<c< c'<1$ such that $\frac{|df|^2}{1+|df|^2} <c$ uniformly and let $\tilde\rho_0(\xi')$ be compactly supported such that $\tilde \rho_0 = 1$ whenever $|\xi'|^2 \leq c$ and its support is compactly contained in   $B_{\sqrt{c'}}$. Introduce another $\tilde \rho$ such that $\tilde \rho = 1$ on $|\xi'|^2 \leq c'$ but whose support is compactly contained in $B_{1}$. Observe that
\begin{eqnarray}
\label{supp of rho tilde}
\left|\xi_1^2 -2\xi_1\frac{(i-g_0(df,\xi') - hF)}{ 1+ |df|^2} -\frac{(1- |\xi'|^2 - \xi'(K))}{1+|df|^2}\right| >0.
\end{eqnarray}
uniformly over the set $\{\xi\in {\supp}{\t\rho},\ x'\in M_0\}$.
The discontinuity of the square root occurs within $|\xi'|^2 \leq |df|^2(1 + |df|^2)^{-1}$. This means that for $\xi'$ in $\supp(1-\tilde\rho_0)$, the function 
$$(1 + i g_0(\xi',df) +hF)^2 - (1-|\xi'|^2 - ih \xi'(K))(1+ |df|^2)$$ 
stays uniformly away from the discontinuity. This means that
\begin{eqnarray}
\label{square root}
r := (1- \t\rho_0) r_0
\end{eqnarray}
is a smooth symbol. We can now decompose $\xi_1^2 -2\xi_1\frac{(i-g_0(df,\xi')-i hF)}{ 1+ |df|^2} -\frac{(1- |\xi'|^2 -ih\xi'(K))}{1+|df|^2}$ as
{\small \begin{eqnarray}
\label{decomposition}
{(\xi_1 - \t a_- +h m_0) }{(\xi_1 - \t a_+ -h m_0)} + \t a_0 + h\{\t a_- ,\t a_+\} - h{m_0 \t a_-}+ h^2 m_0^2
\end{eqnarray}}
with $m_0(x',\xi') := - \t a_+^{-1}\{\t a_- ,\t a_+\}$. Here the $\t a_\pm \in S^1_1(M_0)$ and $\t a_0 \in S^{-\infty}(M_0)$ are defined by 
{\Small\begin{eqnarray}
\label{the a's}
\t a_0(x',\xi') &:=& \t\rho_0(-2+\rho_0)\Big((1+ig_0(df,\xi') + hF )^2 -(1-|\xi'|^2 - ih\xi'(K))(1+ |df|^2)\Big)\\\nonumber
\t a_\pm(x',\xi') &:=& i\frac{(1+ig_0(df,\xi') + hF ) \pm r}{2}.
\end{eqnarray}}
We also denote by $\t A_0$ and $\t A_\pm$ their respective quantizations. Observe that $\supp(\t a_0)$ is uniformly bounded away from the support of $\supp(1-\tilde \rho)$.
Quantizing the factorization \eqref{decomposition} we see that
\begin{eqnarray}
\label{operator factorization}
\frac{1}{1+|df|^2} h^2\tilde\Delta_+ 
= QJ + \t A_0 - h\tilde E_1 +  h^2 \tilde E_0 + h^2 \Op(S^0_1(M_0)).
\end{eqnarray}
Here $\tilde e_1 = m_0 \t a_- \in S^{1}_1(M_0)$, $\tilde e_0 \in S^0_1(M_0)$, and $Q$ and $J$ are associated to the symbols $\xi_n - \tilde a_- + hm_0$ and $\xi_n - \tilde a_+ + hm_0$ respectively. Again, here we use capitalization to denote the quantization of a symbol. We have the following estimate:

\begin{lemma} 
\label{composition estimate}
If $\t G_+^I$ is the Green's function defined by \eqref{green on M}, then there is an operator $(\tilde E_1 \tilde G_+^I)^c$ with bounds
\[(\t E_1 \t G_+^I)^c: L^2 \to_{h^0} H^k,\ \  (\t E_1 \t G_+^I)^c:L^{p'}\to_{h^{-1}} H^k \, \, \forall k\in \N.\]
such that 
\[(\t E_1 \t G_+^I) -(\t E_1 \t G_+^I)^c :L^r\to_{h^{0}} L^r,\ \ 0<r<\infty.\]
\end{lemma}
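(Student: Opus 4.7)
The decisive structural fact is that $\tilde e_1 = m_0 \tilde a_-$ vanishes to leading order on the cosphere bundle of $M_0$: a direct expansion of \eqref{the a's} and \eqref{r_0} at $|\xi'|_{g_0} = 1$ gives $r_0 = 1 + ig_0(df,\xi') + O(h)$, whence $\tilde a_-|_{|\xi'|_{g_0}=1} = O(h)$. The characteristic variety $\{\xi_1=0,\,|\xi'|_{g_0}=1\}$ of $h^2\tilde\Delta_+$ lies in this cosphere bundle, and the $h^{-1}$ blowup of the operator norm of $\tilde G^I_+$ (respectively the $h^{-2}$ from $L^{p'}$ to $L^p$) is concentrated precisely there, so composing with $\tilde E_1$ should buy back exactly one power of $h$.

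The first step is to invoke Lemma \ref{cut up G} and \eqref{compact green estimate} to split $\tilde G^I_+ = (\tilde G^I_+)^c + R$ with $R \in \Op(S^{-2}_1)$. By Proposition \ref{sobolev composition}, $\tilde E_1 R \in \Op(S^{-1}_1)$, which by Proposition \ref{sobolev mapping} maps $L^r \to_{h^0} L^r$ uniformly. This piece is absorbed into the $L^r \to_{h^0} L^r$ remainder, and we set $(\tilde E_1 \tilde G^I_+)^c := \tilde E_1 (\tilde G^I_+)^c$. The $H^k$ target for arbitrary $k$ comes for free from the semiclassical cutoff $\tilde\rho$ appearing in the construction of $(\tilde G^I_+)^c$, which bounds the semiclassical frequency content of the output; thus only the $h$-power in front needs to be verified.

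To extract the $h$-gain, the plan is to use the factorization \eqref{operator factorization} together with Proposition \ref{J inverse} applied to $J$ (whose principal symbol $\xi_1 - \tilde a_+$ has strictly positive imaginary part, thanks to the ellipticity \eqref{EllipticF} and the construction \eqref{the a's}). Modulo $\Op(S^{-\infty})$-smoothing operators, $(\tilde G^I_+)^c$ is identified with an inverse of $QJ/(1+|df|^2)$, and the composition $\tilde A_- J^{-1}$ has leading symbol $\tilde a_-/(\xi_1 - \tilde a_+ + hm_0)$; since the denominator is uniformly bounded away from zero, this composition inherits the $O(h)$-vanishing of $\tilde a_-$ on the cosphere bundle. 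The further composition with the ``$Q^{-1}$''-side factor---whose $h^{-1}$ blowup at the characteristic set is driven by the near-vanishing of $\xi_1 - \tilde a_- + hm_0$---produces the required cancellation and yields the $L^2 \to_{h^0} H^k$ bound. The corresponding $L^{p'}\to_{h^{-1}} H^k$ estimate follows by the same mechanism, combining the $L^{p'}\to L^p$ bound from \eqref{DosKenSal estimate} with the semiclassical Sobolev embedding on compactly supported functions to transition between $L^p$ and $L^2$ regularity.

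The principal obstacle is that $Q$, whose symbol vanishes exactly on the characteristic variety, does not satisfy the ellipticity hypothesis of Proposition \ref{J inverse}, so ``$Q^{-1}$'' does not exist as a clean parabolic inverse in the sense of Section 3. Making the above cancellation rigorous therefore requires either an explicit mode-by-mode analysis in the $\Delta_{g_0}$-eigenbasis of the Fourier-series expression for $G^M_+$---on the $\lambda_l$-eigenmode, $\tilde A_-$ acts semiclassically as a multiplier of size $|\tilde a_-(x',h\sqrt{\lambda_l})| = O(h)$ exactly on the resonant modes $h^2\lambda_l\approx 1$ where $\tilde G^I_+$ is large---or an $h$-dependent symbol calculus in a class such as $S^0_{1/2}$ adapted to the parabolic scale $|\xi_1|,\, ||\xi'|^2_{g_0}-1|\sim h$ on which the resolvent singularity lives. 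I would follow the analogous construction in \cite{chung-tzou}, which carries out this bookkeeping in the Euclidean setting and adapts to the present CTA framework once the symbol classes $S^k_1 S^\ell_j$ of Section \ref{psido} are substituted for the usual product class.
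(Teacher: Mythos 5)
Your geometric instinct is right---$\tilde a_-$ does vanish to $O(h)$ on the cosphere bundle, and this is morally why composing $\tilde E_1$ with $\tilde G_+^I$ buys back a power of $h$---but your proposal does not actually close the argument, and you acknowledge as much. You flag correctly that $Q$ fails to be elliptic on the characteristic variety so there is no clean ``$Q^{-1}$'' in the sense of Proposition \ref{J inverse}, and then you offer two fallbacks (mode-by-mode analysis, an $S^0_{1/2}$ calculus) that would each require substantial further development. That is the gap: the proposal identifies the mechanism but does not exhibit a proof.

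The paper avoids all of this with an algebraic identity rather than an inversion. Since $m_0 = -\tilde a_+^{-1}\{\tilde a_-,\tilde a_+\}$ carries an explicit factor $\tilde a_+^{-1}$, one can write $\tilde e_1 = m_0\,\tilde a_- = (\tilde a_+^{-1}m_0)\cdot(\tilde a_-\tilde a_+)$ and hence
$\tilde E_1\tilde G_+^I = \Op(\tilde a_+^{-1}m_0)\,\Op(\tilde a_-\tilde a_+)\,\tilde G_+^I + h\,\tilde e_1'(x',hD')\tilde G_+^I.$
The product $\tilde a_-\tilde a_+$ is, up to $\tilde\rho_0$-supported and $O(h^2)$ corrections, precisely the $\xi_1$-independent part of the quadratic symbol of $(1+|df|^2)^{-1}h^2\tilde\Delta_+$ from \eqref{quantize the laplacian}. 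Applying $\Op(\tilde a_-\tilde a_+)$ to $\gamma_* G_+^M$ therefore reduces, via \eqref{a_-a_+}, to a constant term plus $\gamma_*(h^2\partial_{y_1}^2 G_+^M)$, a $\Psi$DO composed with $\gamma_*(h\partial_{y_1} G_+^M)$, a $\tilde\rho_0$-microlocalized term, and an $h^2$-small term. The key quantitative input is then concrete and elementary: in the Fourier-series representation the multiplier $\frac{h\xi_1}{h^2\xi_1^2 - 2ih\xi_1 + h^2\lambda_l - 1}$ is \emph{uniformly} bounded, so $h\partial_{y_1}G_+^M : L^2 \to_{h^0} L^2$; the $L^{p'}\to_{h^{-1}} H^k$ bounds then follow from \eqref{DosKenSal estimate}, Sobolev embedding, and Lemma \ref{cut up G}. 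Nothing near the characteristic set needs to be inverted, no $h$-dependent exotic symbol class is invoked, and the $\Psi$DO remainder falls out of Lemma \ref{cut up G} exactly as you anticipated. The lesson is that the $O(h)$-vanishing of $\tilde a_-$ on the cosphere bundle manifests not through a cancellation with ``$Q^{-1}$'' but through the boundedness of the $y_1$-derivative multipliers after the $\tilde a_-\tilde a_+$ factorization.

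One small separate point: your bound $\tilde E_1 R \in \Op(S^{-1}_1)$ for $R\in\Op(S^{-2}_1)$ should really be phrased in the mixed class $S^1_1(M_0)S^{-2}_1(M)$ and then passed through \eqref{mixed space embedding}; as stated the conclusion ($L^r\to_{h^0} L^r$) is fine, but the symbol class is mixed rather than $S^{-1}_1(M)$.
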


\begin{proof}First write
\begin{eqnarray}
\label{split E_1}
\tilde E_1 \t G_+^I = \Op (\t a_+^{-1}m_0 \t a_- \t a_+) \t G_+^I = \Op(\t a_+^{-1} m_0) \Op(\t a_- \t a_+) \t G_+^I + h\t e'_1(x',hD') \t G_+^I
\end{eqnarray}
for some $\t e_1' \in S^{0}_1(M_0)$. Note that
 \[\Op(\t a_-\t a_+) \t G^I_+ = \Op (\t a_-\t a_+) \gamma_*(\t \chi) \gamma_* G_+^M \gamma_*\chi =  \gamma_*(\t \chi) \Op (\t a_-\t a_+) \gamma_* G_+^M \gamma_*\chi.\]
We were able to commute multiplication by $\gamma_*\t \chi$ and  $\Op (\t a_-\t a_+)$ thanks to the fact that in \eqref{green on M} we have chosen $\t\chi$ so that $\gamma_*\t\chi$ is a function of $x_1$ only. Expanding $\t a_-\t a_+$ we see using \eqref{quantize the laplacian} that 
{\Small\begin{eqnarray}
\label{a_-a_+}\nonumber
\gamma_*(\t \chi) \Op (\t a_-\t a_+) \gamma_* G_+^M \gamma_*\chi&=& \gamma_*(\t \chi)\Big(({1+|df|^2}) - \gamma_*(h^2\partial_{y_1}^2 G_+^M) -2\Op\Big( \frac{ i-g_0(\xi',df)+ hi F}{1 + |df|^2}\Big) \gamma_*(h\partial_{y_1}G^M_+)\\ &&+  \t\rho_0(x',hD') \Op(S_1^1(M_0)) G_+^M+ h^2\Op(S^0_1)G_+^M\Big)\gamma_*\chi
\end{eqnarray}}
We first show that the first term of \eqref{split E_1} is a sum of an operator in $\Op(S^{-1}_1(M_0)) \Op(S^{0}_1(M))$ with an operator mapping $L^2 \to_{h^0} H^k$ and $L^{p'} \to_{h^{-1}} H^k$. To this end it suffices to show that \eqref{a_-a_+} is the sum of an operator in $\Psi^0_1(M)$ and an operator mapping $L^2 \to_{h^0} H^k$ and $L^{p'} \to_{h^{-1}} H^k$.

Using Lemma \ref{cut up G} we see that the last term is of the form 
$$h^2\gamma_*(\t \chi)\Big( \Op(S^{-2}_1(M)) +  \Op(S^{-\infty}(M))G_+^M\Op(S^{-\infty}(M))\Big)\gamma_*\chi.$$ 
This is the sum of a $\Psi$DO and a term which takes $L^2 \to_{h^0} H^k$ and $L^{p'}\to_{h^{-1}} H^k$ due to Proposition \ref{DosKenSal estimate} and Sobolev embedding. For the second last term, since $\t\rho$ is microlocally supported away from the characteristic set, it is of the form $\Op(S^{-2}_1(M)) + h^\infty\Op(S^{-\infty}(M))$ by Lemma \ref{cut up G}.

We analyze the term involving $h\partial_{y_1}G^M_+$ in \eqref{a_-a_+}. Using Lemma \ref{cut up G} we can write
{\small \[h\partial_{y_1} G^M_+ = \Op(S^{-1}_1(M)) + h \Op(S^{-\infty}(M)) G^M_+ \Op(S^{-\infty}(M)) + \Op(S^{-\infty}(M)) h\partial_{y_1}G_+^M \Op(S^{-\infty}(M)).\]}
The first term is a $\Psi$DO. The second term takes $L^2 \to_{h^0} H^k$ and $L^{p'}\to_{h^{-1}} H^k$ due to \eqref{DosKenSal estimate} and Sobolev embedding. The operator $h\partial_{y_1}G_+^M: L^2\to_{h^0} L^2$ since the Fourier multiplier $\frac{h\xi_1}{h^2\xi_1^2 - 2ih\xi_1 + h^2\lambda_l -1}$ is now uniformly bounded. Therefore, the term in \eqref{a_-a_+} involving $h\partial_{y_1} G^M_+$ can be written as an element of $\Psi^0_1(M)$ plus a term which takes $L^2 \to_{h^0} H^k$ and $L^{p'} \to_{h^{-1}} H^k$. Same argument shows that the term in \eqref{a_-a_+} involving $h^2\partial_{y_1}^2 G_+^M$ can be written as the linear combination of an operator in $\Psi^{0}_1(M)$ and a term mapping $L^2 \to_{h^0} H^k$ and $L^{p'} \to_{h^{-1}} H^k$.

We have thus shown that \eqref{a_-a_+} is the sum of an operator in $\Psi^0_1(M)$ with an operator mapping $L^2 \to_{h^0} H^k$ and $L^{p'} \to_{h^{-1}} H^k$. The second term of \eqref{split E_1} can be treated analogously to see that it is the sum of an operator in $\Op(S^0_1(M_0)) \Op(S^{-2}_1(M))$ with an operator mapping $L^2 \to_{h^0} H^k$ and $L^{p'} \to_{h^{-1}} H^k$. This completes the proof.
\end{proof}
\end{subsection}

\begin{subsection}{Approximate Semiclassical Inverse}\label{large freq}
Let $\tilde\Omega\subset M$ be a smooth bounded open subset contained in $M_+$ with a portion of the boundary intersecting $x_1 = 0$. Choose a bounded open interval $I \subset \R$ such that $\t \Omega \subset\subset \gamma(I\times M_0)$. Let $\tilde G_+^I$ be the Green's function defined by \eqref{green on M}, and $J^+ := J^{-1} {\bf 1}_{M_+}$. We first show that the operator 
\[
E_\ell :=(1- \tilde\rho (x',hD'))J^+ J \tilde G_+^I
\]
is a parametrix for $h^2\tilde\Delta_+$ in $\tilde \Omega$ for $\xi'$ large. We see first using \eqref{compact green estimate} and Proposition \ref{J inverse} that 

\begin{eqnarray}
\label{estimates for P_l}
E_\ell : L^2 \to_{h^{-1}} H^1,\ \ E_\ell : L^{p'}\to_{h^{-2}} H^1, \ \ E_\ell : L^{p'}\to_{h^{-2}} L^p.
\end{eqnarray}

We now state the parametrix property for $E_\ell$. If ${\bf 1}_{\t \Omega}$ is the indicator function of $\t\Omega$, then for all $v\in L^r(\t\Omega)$ we use ${\bf 1}_{\t\Omega} v$ to denote its trivial extension to a function in $L^r(M)$.
\begin{prop} \label{P_l remainders}
The operator $E_\ell$ is a Dirichlet parametrix. This means for $v\in L^{p'}$,
{\small \[
h^2{\bf 1}_{\t\Omega}\t \Delta_\phi E_\ell {\bf 1}_{\t\Omega} v= ((1+ |df|^2)(1 - \t \rho(hD'))(1+|df|^2)^{-1} +R_l + R_l')v,\ \ 
E_\ell v\mid_{M_-} = E_\ell v \mid_{x_1 = 0} = 0
\]}
as distribution on $\t \Omega$ with
\begin{eqnarray}
\label{large freq remainders}
R_l : L^2 \to _{h} L^2,\ \ R_l : L^{p'} \to_{h^{0}} L^2, \ \ R_l' : L^r\to_{h^0} L^r.
\end{eqnarray}
Furthermore, if ${\supp}(v) \subset \overline M_+$ then ${\supp}(R_l v) \subset \overline M_+$.
\end{prop}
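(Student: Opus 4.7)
The approach is to apply $h^2\t\Delta_+$ directly to $E_\ell v$, use the factorization \eqref{operator factorization}, and exploit the algebraic identity $JJ^+ = \mathbf{1}_{M_+}$ (from $J^+ = J^{-1}\mathbf{1}_{M_+}$ and the inverse property of Proposition \ref{J inverse}) together with the $x_1$-locality of the first-order differential operator $Q$. The Dirichlet conditions $E_\ell v|_{M_-} = E_\ell v|_{x_1 = 0} = 0$ are then automatic, since $J^+u = J^{-1}\mathbf{1}_{M_+}u$ vanishes on $\{x_1 \le 0\}$ by \eqref{support of j^-1} transported through the Neumann series of Proposition \ref{J inverse}, and $(1-\t\rho(x',hD'))$ acts only in the transverse variable.

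For the main identity, substitute \eqref{operator factorization} into $h^2\t\Delta_+ E_\ell v = h^2\t\Delta_+(1-\t\rho)J^+ J\t G^I_+ v$ and commute $(1-\t\rho)$ past $(1+|df|^2)$ and across the bracket $[QJ + \t A_0 - h\t E_1 + h^2\t E_0 + h^2\Op(S^0_1)]$, accumulating $O(h)$ $\Psi$DO commutators in the remainder. For the $QJ$ piece, $QJJ^+ = Q\mathbf{1}_{M_+}$; since $Q$ is first-order differential in $x_1$ with $\Psi$DO coefficients on $M_0$, the boundary delta from differentiating $\mathbf{1}_{M_+}$ is supported on $\{x_1 = 0\}$, disjoint from the open interior of $\t\Omega \subset M_+$, so $Q\mathbf{1}_{M_+}J\t G^I_+ v|_{\t\Omega} = QJ\t G^I_+ v|_{\t\Omega}$. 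Reapplying the factorization backwards, together with $h^2\t\Delta_+\t G^I_+ \equiv 1$ on $\t\Omega$ (the commutator defect $[h^2\Delta_+,\t\chi]G^M_+\chi$ vanishes there by the choice $\t\chi \equiv 1$ near $I\times M_0 \supset\supset \t\Omega$), yields modulo commutators
\[ h^2\t\Delta_+ E_\ell v \equiv (1-\t\rho)v + (1-\t\rho)(1+|df|^2)(\t A_0 - h\t E_1 + h^2\t E_0 + h^2\Op(S^0_1))(J^+J - Id)\t G^I_+ v. \]
The identity $(1-\t\rho)v = (1+|df|^2)(1-\t\rho)(1+|df|^2)^{-1}v - (1+|df|^2)[(1-\t\rho),(1+|df|^2)^{-1}]v$ isolates the stated principal contribution at the cost of an $O(h)$ smoothing commutator on $M_0$.

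The residue is classified by the surviving Green's-function factor. Terms carrying $J^+J\t G^I_+$ inherit $\overline{M_+}$-support preservation from $J^+$ (Proposition \ref{J inverse}) and the $x'$-only action of surrounding $\Psi$DOs, and are assembled into $R_\ell$: namely (i) $(1-\t\rho)(1+|df|^2)(\t A_0 - h\t E_1 + h^2\t E_0 + h^2\Op(S^0_1))J^+J\t G^I_+ v$, (ii) $[h^2\t\Delta_+,\, 1-\t\rho]J^+J\t G^I_+ v$ and its companion $(1-\t\rho) \leftrightarrow (1+|df|^2)$ brackets, and (iii) the pure $M_0$-commutator $-(1+|df|^2)[(1-\t\rho),(1+|df|^2)^{-1}]v$. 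The required $L^2\to_h L^2$ and $L^{p'}\to_{h^0} L^2$ bounds are delivered by: $(1-\t\rho)\t A_0 = O(h^\infty)$ from the disjoint $\xi'$-supports built into \eqref{supp of rho tilde} (with $\t\rho \equiv 1$ on $|\xi'|^2 \le c'$ and $\t a_0$ supported in $|\xi'|^2 \le c'' < c'$); the $h^2\t E_0$ and $h^2\Op(S^0_1)$ pieces gaining a factor of $h$ against the $h^{-1}$ loss of $\t G^I_+$ on $L^2$ (respectively $h^{-2}$ on $L^{p'}\to L^p$ composed with the bounded-domain inclusion $L^p \hookrightarrow L^2$); the commutator $[h^2\t\Delta_+,\, 1-\t\rho]$ being $h$ times a $\Psi$DO of $\xi'$-compact symbol (since $d\t\rho$ has compact $\xi'$-support); and the $h\t E_1$ piece by Lemma \ref{composition estimate}.

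Into $R_\ell'$ goes the support-losing companion $-(1-\t\rho)(1+|df|^2)(\t A_0 - h\t E_1 + h^2\t E_0 + h^2\Op(S^0_1))\t G^I_+ v$, bounded $L^r \to_{h^0} L^r$ by the same three ingredients: $(1-\t\rho)\t A_0 = O(h^\infty)$, Lemma \ref{composition estimate} for $h\t E_1\t G^I_+$, and \eqref{DosKenSal estimate} with Sobolev embedding on the bounded $\t\Omega$ for the $h^2$ tails. The main obstacle is the commutator bookkeeping: verifying that every error failing to route through $J^+$ is relegated to $R_\ell'$ while every support-preserving piece meets the sharper $R_\ell$ semiclassical bound --- especially when handling $(J^+J - Id)\t G^I_+ = -J^{-1}\mathbf{1}_{M_-}J\t G^I_+$ near $\{x_1 = 0\}$, where the $h^2$ disjoint-support gain of Lemma \ref{disjoint support for J inverse} is unavailable and one must rely on the symbol-support structure of $(1-\t\rho)$ to close the estimate.
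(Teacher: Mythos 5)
Your overall scheme is the paper's: start from the factorization \eqref{operator factorization}, push it through $E_\ell = (1-\t\rho)J^+J\t G^I_+$ using $JJ^+ = \mathbf{1}_{M_+}$, peel off the principal term via $h^2\t\Delta_+\t G^I_+ = I$ on $\t\Omega$, and sort the residue into support-preserving ($R_l$) and support-agnostic ($R_l'$) pieces. The boundary-condition argument and the treatment of $(1-\t\rho)\t A_0$ by disjoint $\xi'$-support are correct and match the paper.

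The gap is in the commutator term $[h^2\t\Delta_+, \t\rho]J^+J\t G^I_+$, which must land in $R_l$ and hence must map $L^2\to_h L^2$. You argue that $[h^2\t\Delta_+, 1-\t\rho]$ is ``$h$ times a $\Psi$DO of $\xi'$-compact symbol.'' That is true, but it is not enough: $J^+J\t G^I_+$ costs $h^{-1}$ on $L^2$ (cf.\ \eqref{estimates for P_l}), so a bare factor of $h$ from the commutator only yields $O(h^0)$ on $L^2$, not the required $O(h)$. The extra gain in the paper comes from a genuinely microlocal fact: $d\t\rho$ is supported away from the characteristic set $\{\xi_1=0,|\xi'|=1\}$, so after commuting the $\Psi$DO on $M_0$ through $\mathbf{1}_{M_+}$ one can apply Lemma \ref{microlocal wf}, which upgrades the estimate on the Green's function when hit by a symbol microsupported off the characteristic variety. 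This is exactly the content of Lemma \ref{commuting the laplacian}, and without invoking it (or reproducing its microlocal argument) your estimate for this term does not close.

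A secondary gap of the same flavor: you cite Lemma \ref{composition estimate} to bound ``the $h\t E_1$ piece,'' but that lemma controls $\t E_1\t G^I_+$, not $\t E_1 J^+J\t G^I_+$. The paper first commutes $\t E_1\in\Psi^1_1(M_0)$ past $J^{-1}$, $\mathbf{1}_{M_+}$, $J$ (the commutators with $J^{\pm 1}$ produce extra $h$-gains, and commutation with $\mathbf{1}_{M_+}$ is exact because $\t E_1$ acts only in $x'$), and only then applies Lemma \ref{composition estimate} to the resulting $\t E_1\t G^I_+$. Absent that intermediate step, the direct composition again gives only $O(h^0)$ on $L^2$. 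You flag the difficulty in your closing paragraph but do not resolve it; the resolution is precisely these two microlocal/commutation maneuvers from the paper.
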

\begin{proof} 
Express $h^2\t \Delta_+$ using \eqref{operator factorization} we get

{\begin{eqnarray}
\label{Delta P_l}
\nonumber
h^2(1+|df|^2)^{-1} \tilde \Delta_+E_\ell&=&( I-\tilde\rho(x',hD'))(1+|df|^2)^{-1} h^2\tilde \Delta_+ J^+ J  \tilde G^I_+ + [h^2\tilde\Delta_+, \tilde\rho] J^+J \tilde G^I_+ \\
&=& (I- \t\rho (x',hD')) (1+|df|^2)^{-1}+  [(1+|df|^2)^{-1}h^2\tilde\Delta_\phi, \tilde\rho] J^+J  \tilde G^I_+\\\nonumber  &&+ h\t E_1(I-J^+J) \t G_+^I + R
\end{eqnarray}}
where
{\small$$R = (I-\t\rho(x',hD'))(1+|df|^2)\Big(\t A_0(I - J^+J) -h^2 \t E_0(I - J^+J) + h^2 \Psi_1^0(M_0) + h^2 \Psi_1^0(M_0) J^+J\Big)\t G_+^I.$$}
Using \eqref{compact green estimate}, Sobolev embedding, and the fact that $I-\tilde\rho(x',hD')$ is microlocally disjoint from $\t A_0$ by the choice of $\t\rho$ in \eqref{the a's}, we see that every term in $R$ takes $L^2 \to _{h} L^2$, $L^{p'} \to_{h^{0}} L^2$. 

Directly by using Lemma \ref{composition estimate} the term $h\t E_1 \t G_+^I$ can be written as $R_l + R_l'$ where $R_l$ and $R_l'$ satisfies the estimates of \eqref{large freq remainders}. Writing explicitly the term 
$$ h\t E_1 \t J^+J\t G_+^I = h \t E_1 J^{-1} {\bf1}_{M_+} J \t G_+^I$$
we can commute $\t E_1$ with all the pseudodifferential operators by using standard calculus. Estimate the terms involving commutators using \eqref{compact green estimate} to see that they are of the form \eqref{large freq remainders}. Commuting $\t E_1$ with ${\bf1}_{M_+}$ yields nothing since $\t E_1 \in \Psi^{1}_1(M_0)$ and ${\bf 1}_{M_+}$ is constant along each fiber of the foliation of $M = \R\times M_0 $. Eventually $\t E_1$ will appear next to $\t G^I_+$ and we can use Lemma \ref{composition estimate} again to show that it is of the form \eqref{large freq remainders}.

The only remaining term to treat in \eqref{Delta P_l} is the $[(1+|df|^2)^{-1}h^2\tilde\Delta_+, \tilde\rho] J^+J  \tilde G^I_+$ term. This is done in
\begin{lemma}
\label{commuting the laplacian} The commutator term 
\[ [(1+|df|^2)^{-1}h^2\t \Delta_+, \t \rho(x',hD')] J^+ J \t G_+^I\]
maps $L^2\to_h L^2$ and $ L^{p'} \to_{h^{0}} L^2$.
\end{lemma}
and the proof is complete.\end{proof}

\begin{proof}[Proof of Lemma \ref{commuting the laplacian}]
Since $[hD_1, \t\rho(x',hD')] = 0$ we have, using the expression \eqref{quantize the laplacian}
{\small\begin{eqnarray*}
[(1+|df|^2)^{-1}h^2\t \Delta_\phi, \t \rho] J^+ J \t G_+^I &\equiv&  \Big [ \Op\Big( - 2\frac{ i-g_0(\xi',df)+ hi F}{1 + |df|^2} \Big), \t \rho(x',hD')\Big] hD_n J^+J\t G_+^I \\&&+ \Big[\Op\Big( \frac{1- |\xi'|^2+hi \xi'(K)}{1+ |df|^2}\Big), \t\rho(x',hD')\Big] J^+J\t G_+^I.\\
&\equiv&  \Big [ \Op\Big( - 2\frac{ i-g_0(\xi',df)+ hi F}{1 + |df|^2} \Big), \t \rho(x',hD')\Big] (I + \t A_+ J^{-1}){\bf 1}_{M_+}J\t G_+^I \\&&+ \Big[\Op\Big( \frac{1- |\xi'|^2+hi \xi'(K)}{1+ |df|^2}\Big), \t\rho(x',hD')\Big] J^+J\t G_+^I\\
\end{eqnarray*}}
Here "$\equiv$" denotes equivalence modulo a map taking $L^2\to_{h} L^2$ and $L^{p'} \to_{h^{0}} L^2$. Observe that since both $ \Big [ \Op\Big( - 2\frac{ i-g_0(\xi',df)+ hi F}{1 + |df|^2} \Big), \t \rho\Big] $ and $ \Big[\Op\Big( \frac{1- |\xi'|^2+hi \xi'(K)}{1+ |df|^2}\Big), \t\rho(x',hD')\Big]$ are $\Psi$DO on $M_0$ they commute with the indicator function ${\bf 1}_{M_+}$. Using this and the fact that $\t\rho(x',hD')(x',\xi')$ is supported away from the characteristic set of $\t \Delta_+$ we have 
\[[(1+|df|^2)^{-1}h^2\t \Delta_\phi, \t \rho] J^+ J \t G_+^I \equiv h(I + \t A_+J^{-1}) {\bf 1}_{M_+} a(x,hD) \t G^I_+ + h J^+ b(x,hD) \t G^I_+\]
for some $a,b \in S^1_0(M)$ supported away from the characteristic set of $\t \Delta_+$. We now apply Lemma \ref{microlocal wf} to see that this operator takes $L^2\to_{h} L^2$ and $L^{p'} \to_{h^{0}} L^2$.
\end{proof}
The following Lemma says that $E_\ell$ is almost like $\t G_+^I$ on compact subsets of the open set $M_+$:
\begin{lemma}
\label{P_l disjoint support}
Let $a(x,hD)$ be a first order differential operator with coefficients compactly supported in the region $\{x_1 \geq \epsilon \}$ for some $\epsilon >0$. Then
\[{\bf 1}_{\t \Omega} ha(x,hD)(\t G_+^I - E_\ell){\bf 1}_{\t \Omega}  : L^2(\t\Omega)\to_h L^2(\t\Omega),\]
\[{\bf 1}_{\t \Omega} ha(x,hD)(\t G_+^I - E_\ell){\bf 1}_{\t \Omega}  : L^{p'}(\t\Omega)\to_{h^0} L^2(\t\Omega).\]
\end{lemma}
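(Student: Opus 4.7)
The plan is to split
$$\tilde G_+^I - E_\ell = \tilde\rho(x',hD')\tilde G_+^I + (1-\tilde\rho(x',hD'))(I - J^+J)\tilde G_+^I$$
and rewrite the second summand, via the algebraic identity $I - J^+J = J^{-1}(I-{\bf 1}_{M_+})J = J^{-1}{\bf 1}_{M_-}J$, as $(1-\tilde\rho(x',hD'))J^{-1}{\bf 1}_{M_-}J\tilde G_+^I$. Each of the two resulting pieces will be handled by a different microlocal mechanism.

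For the first piece $h a(x,hD)\tilde\rho(x',hD')\tilde G_+^I$: by the construction in Section 4.1, $\supp\tilde\rho\subset\{|\xi'|<1\}$ is disjoint from the characteristic set $\{\xi_1=0,|\xi'|=1\}$ of $h^2\tilde\Delta_+$. Standard semiclassical composition places $ha(x,hD)\tilde\rho(x',hD')$ in $h\Psi^1_0(M)$ with principal symbol still supported away from that set. Lemma \ref{microlocal wf} applied to the composed symbol yields the bounds $L^2\to_{h^0}L^2$ and $L^{p'}\to_{h^{-1}}L^2$, and the prefactor $h$ then upgrades these to the desired $L^2\to_h L^2$ and $L^{p'}\to_{h^0}L^2$.

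For the second piece $ha(x,hD)(1-\tilde\rho)J^{-1}{\bf 1}_{M_-}J\tilde G_+^I$: first apply Lemma \ref{cut up G} to split $\tilde G_+^I = (\tilde G_+^I)^c + \Op(S^{-2}_1)$. The $\Op(S^{-2}_1)$ term composes with the first-order $J$ into $\Op(S^{-1}_1)$, uniformly bounded on every $L^r$; the $(\tilde G_+^I)^c$ term maps into compactly $x_1$-supported functions of arbitrarily high semiclassical Sobolev regularity, and combining with $J$ and the inclusion $L^p\hookrightarrow L^{p'}$ on compact sets yields $J\tilde G_+^I:L^2\to_{h^{-1}}L^2$ and $L^{p'}\to_{h^{-2}}L^{p'}$, both compactly $x_1$-supported. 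Invoking the disjoint-support mechanism underlying Lemma \ref{disjoint support for J inverse} -- the semiclassical rapid decay of the kernel of $J^{-1}$ off the diagonal -- one sees that $J^{-1}{\bf 1}_{M_-}$ in fact maps $L^r$ into $W^{k,r}(\{x_1\geq\epsilon\})$ with an $h^N$-gain for any $k, N$. Choosing $k$ and $N$ so that the $h$-losses from $J\tilde G_+^I$ cancel, the output lies in $H^2$ (from $L^2$) and $W^{2,p'}$ (from $L^{p'}$) with $h^0$-bounds on $\{x_1\geq\epsilon\}$; the semiclassical Sobolev embedding $W^{2,p'}\hookrightarrow W^{1,2}$ on the bounded domain $\tilde\Omega$ followed by $ha(x,hD):W^{1,2}\to_h L^2$ then closes both estimates.

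The main obstacle is the $L^{p'}\to L^2$ bound, where the $h^{-2}$-loss of $\tilde G_+^I$ on $L^{p'}$ must be absorbed by disjoint-support gains and a semiclassical Sobolev embedding placed at exactly the right step. Extracting higher-order semiclassical Sobolev regularity from $J^{-1}{\bf 1}_{M_-}$ beyond the $W^{1,r}$ statement of Lemma \ref{disjoint support for J inverse} is the key technical point to make the final $h$-power nonnegative.
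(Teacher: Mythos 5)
Your decomposition
\[
\t G_+^I - E_\ell = \t\rho(x',hD')\t G_+^I + (1-\t\rho(x',hD'))J^{-1}{\bf 1}_{M_-}J\t G_+^I
\]
is algebraically correct and in fact cleaner than the paper's, which instead splits off $(I-J^+J)\t G_+^I$ and a piece $\t\rho J^{-1}{\bf 1}_{M_+}J\t G_+^I$ that then has to be handled by commuting $\t\rho$ past $J^{-1}$ and ${\bf 1}_{M_+}$. Your Piece 1 ($\t\rho\t G_+^I$) goes directly to Lemma \ref{microlocal wf} with no commutation, and that part of the argument is fine (modulo the $O(h^2)$ composition remainder, which one checks is harmless after composing with $\t G_+^I$ and using $L^p\subset L^2$ on the compact support).

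The gap is in Piece 2. You invoke an upgraded form of Lemma \ref{disjoint support for J inverse}, namely that $J^{-1}{\bf 1}_{M_-}\colon L^r\to W^{k,r}(\{x_1\geq\epsilon\})$ with an $h^N$ gain for \emph{arbitrary} $k,N$, and you need at least $k=2$ to pay for the Sobolev embedding $W^{2,p'}\hookrightarrow W^{1,2}$. But Proposition \ref{J inverse} writes $J^{-1}=j^{-1}(x',hD)\big(1+h(j^{-1}B_0)(x',hD)+h^2m_2(x',hD)\big)$, and the only information about $m_2$ is that it is $L^r\to_{h^0}L^r$ bounded; it is not asserted to be a $\Psi$DO or pseudolocal. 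Consequently $\zeta_\epsilon\, j^{-1}(x',hD)\,h^2 m_2\,{\bf 1}_{M_-}$ gets no disjoint-support gain at all: it maps $L^r\to_{h^2}W^{1,r}$ and nothing more, since $j^{-1}$ supplies only one derivative. That is exactly the $h^2$, $W^{1,r}$ bound the paper states, and with only $W^{1,p'}$ available your chain terminates in $L^{p'}$ (after $ha$) rather than $L^2$, so the second estimate does not close. You correctly flag this as ``the key technical point,'' but then assume it away.

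The fix, which is what ``evoking the mapping properties of \eqref{compact green estimate}'' is pointing at, is to land in $L^2$ \emph{before} applying Lemma \ref{disjoint support for J inverse}, so that its stated $W^{1,2}$, $h^2$ output already suffices for $ha(x,hD)\colon W^{1,2}\to_h L^2$. Concretely: split $J\t G_+^I = J(\t G_+^I)^c + J\,\Op(S^{-2}_1)$; the first maps $L^{p'}\to_{h^{-2}} W^{k-1,p}$ which is $\subset L^2$ on the compactly $x_1$-supported output (here $p>2$), while the second is in $\Psi^{-1}_1$, so $L^{p'}\to_{h^0} W^{1,p'}\hookrightarrow_{h^{-1}} L^2$ by semiclassical Sobolev embedding. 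Then $J^{-1}{\bf 1}_{M_-}\colon L^2\to_{h^2}W^{1,2}(\{x_1\geq\epsilon\})$ and $ha\colon W^{1,2}\to_h L^2$ close the $L^{p'}$ estimate with a power of $h$ to spare, and the $L^2$ estimate is strictly easier. No strengthening of Lemma \ref{disjoint support for J inverse} beyond what is stated is needed.
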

\begin{proof}
We have by definition 
$${\bf 1}_{\t \Omega} ha(x,hD) (\t G_+^I - E_\ell){\bf 1}_{\t \Omega} = {\bf 1}_{\t \Omega} ha(x,hD) (\t G_+^I - (1-\t\rho(x',hD'))J^+J \t G_+^I){\bf 1}_{\t \Omega}.$$
For the $ {\bf 1}_{\t \Omega} ha(x,hD) (\t G_+^I - J^+J \t G_+^I){\bf 1}_{\t \Omega}$ portion we have

\[{\bf 1}_{\t \Omega} ha(x,hD) (I - J^+J)\t G_+^I {\bf 1}_{\t \Omega}  ={\bf 1}_{\t \Omega}ha(x,hD) J^{-1}{\bf1}_{ M_-} J\t G_+^I {\bf 1}_{\t \Omega} \]
By assumption $a(x,hD)$ is a first order differential operator with coefficients supported in $\{x_1 \geq \epsilon >0\}$. The proof is complete by evoking Lemma \ref{disjoint support for J inverse} and the mapping properties of \eqref{compact green estimate}.

Moving on to the $ {\bf 1}_{\t \Omega} ha(x,hD) (\t \rho(x',hD') J^{-1}{\bf 1}_{M_+}J \t G_+^I){\bf 1}_{\t \Omega}$ portion we have
$${\bf 1}_{\t \Omega} ha(x,hD) (\t \rho(x',hD') J^{-1}{\bf 1}_{M_+}J \t G_+^I){\bf 1}_{\t \Omega} \equiv{\bf 1}_{\t \Omega} ha(x,hD) (J^{-1}{\bf 1}_{M_+}J \t \rho(x',hD')  \t G_+^I){\bf 1}_{\t \Omega}$$
where $\equiv$ denotes equality up to a map taking $L^2(\t\Omega)\to_h L^2(\t\Omega)$ and $L^{p'}(\t\Omega)\to_{h^0} L^2(\t\Omega)$. Note that we were able to commute $\t\rho(x',hD')$ with ${\bf 1}_{M_+}$ because the indicator function is constant along each fiber $\{x_1 = const\}$ and $\t\rho(x',hD')$ acts in the $x'$ direction only. The proof is complete by observing that \eqref{supp of rho tilde} says that $\t\rho(x',\xi')$ is supported away from the characteristic set of $\t\Delta_+$ and apply Lemma \ref{microlocal wf}.
\end{proof}

At small $\xi'$ on the support of $\t\rho(x',\xi')$ the square root defined in \eqref{r_0} is discontinuous so we cannot factor $\t\Delta_+$ as in \eqref{decomposition}. Here we are saved by the fact that $\t\Delta_+$ is actually elliptic thanks to \eqref{supp of rho tilde}. The parametrix in this region can then be constructed via is straightforward elliptic calculus. To this end define
\[
{\mathcal P}(x',\xi) := \xi_1^2 -2\xi_1\frac{(i-g_0(df,\xi')-i hF)}{ 1+ |df|^2} -\frac{(1- |\xi'|^2 -ih\xi'(K))}{1+|df|^2}
\]
and 
\[
E_s := \frac{\t \rho}{{\mathcal P}}(x',hD) \circ (1+|df|^2)^{-1}.
\]
The parametrix $E_s$ inverts $h^2\tilde{\Delta}_{+}$ at small $\xi'$ modulo $O(h)$:

\begin{prop}
\label{small freq parametrix} 
We have that for $0<r<\infty$, $E_s: L^r \to W^{2,r}$. Furthermore, at small frequencies $E_s$ inverts $h^2\t \Delta_+$ in the sense that
\[
h^2 \t \Delta_+ E_s =  (1+|df|^2) \t\rho(x', hD') (1+|df|^2)^{-1}+R_s
\]
for some $R_s : L^r\to_h L^r$, for all $0<r<\infty$.
\end{prop}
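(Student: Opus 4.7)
The plan is to construct $E_s$ via elliptic calculus on the support of $\tilde\rho$, exploiting the product symbol classes of Section \ref{psido}. First I would verify that $\tilde\rho(\xi')/\mathcal{P}(x',\xi)$, extended by zero outside $\supp\tilde\rho$, belongs to $S^0_1(M_0)\,S^{-2}_1(M)$. Writing it as the product $\tilde\rho(\xi')\cdot\bigl(\chi(\xi')/\mathcal{P}(x',\xi)\bigr)$ with $\chi\in C^\infty_c$ equal to $1$ on $\supp\tilde\rho$, the first factor lies in $S^{-\infty}(M_0)\subset S^0_1(M_0)$, while the second factor is smooth because $\mathcal{P}$ is uniformly elliptic there by \eqref{supp of rho tilde}, and decays like $\xi_1^{-2}$ as $|\xi_1|\to\infty$ with $\xi'$ bounded. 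Multiplication by the smooth bounded factor $(1+|df|^2)^{-1}$ preserves the class, so Proposition \ref{sobolev mapping} yields $E_s:L^r\to W^{0,r}W^{2,r}\subset W^{2,r}$.

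For the parametrix identity I would substitute \eqref{quantize the laplacian} to split
\[h^2\tilde\Delta_+ E_s=(1+|df|^2)\,\Op(\mathcal{P})\,\Op(\tilde\rho/\mathcal{P})\,(1+|df|^2)^{-1}+h^2\,\Op(S^0_1(M_0))\,E_s.\]
The second piece maps $L^r\to_{h^2}L^r$ by \eqref{semiclassical calderonvaillancourt} together with the $L^r$-boundedness of $E_s$. For the first, Proposition \ref{sobolev composition} gives
\[\Op(\mathcal{P})\,\Op(\tilde\rho/\mathcal{P})=\Op(\tilde\rho)+\tfrac{h}{2i}\Op\bigl(\{\mathcal{P},\tilde\rho/\mathcal{P}\}\bigr)+h^2 m(x,hD),\]
with $m(x,hD)$ an $L^r$-bounded remainder. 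Since $\tilde\rho$ depends only on $\xi'$, $\Op(\tilde\rho)=\tilde\rho(x',hD')$, and conjugating by $(1+|df|^2)^{\pm 1}$ produces exactly the principal term in the statement.

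The subprincipal Poisson bracket goes into $R_s$: using $\partial_{\xi_1}\tilde\rho=0$ one computes $\{\mathcal{P},\tilde\rho/\mathcal{P}\}=-\mathcal{P}^{-1}\,\partial_{x'}\mathcal{P}\cdot\partial_{\xi'}\tilde\rho$, which is supported in the compact set $\supp\partial_{\xi'}\tilde\rho$, smooth there, and of order $-1$ in $\xi_1$ (since $\partial_{x'}\mathcal{P}$ is first order in $\xi_1$ while $\mathcal{P}^{-1}$ is of order $-2$ on the relevant support). Thus the bracket lies in $S^0_1(M_0)\,S^{-1}_1(M)$, its quantization is $L^r$-bounded by Proposition \ref{sobolev mapping}, and multiplication by $h/(2i)$ gives the $O(h)$ contribution to $R_s$. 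The residual $h^2 m$ and $h^2\Op(S^0_1(M_0))E_s$ pieces are even smaller, and multiplication by the smooth bounded $(1+|df|^2)^{\pm 1}$ does not affect these estimates. The only delicate point is keeping track of the decoupled $\xi_1$- and $\xi'$-decay through the product symbol calculus so that Propositions \ref{sobolev mapping} and \ref{sobolev composition} apply cleanly; once this bookkeeping is set up, the rest is standard elliptic calculus.
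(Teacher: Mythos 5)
Your proposal follows essentially the same route as the paper: verify $\tilde\rho/\mathcal{P}$ lies in a symbol class covered by Section \ref{psido}, then apply Propositions \ref{sobolev mapping} and \ref{sobolev composition} together with \eqref{quantize the laplacian}, with the subprincipal bracket and $h^2$ remainders absorbed into $R_s$. The one place you should be more careful is the claim that $\chi(\xi')/\mathcal{P}(x',\xi)\in S^{-2}_1(M)$: requiring $\chi=1$ on $\supp\tilde\rho$ does not by itself guarantee $\mathcal{P}$ is non-vanishing on $\supp\chi$ (estimate \eqref{supp of rho tilde} is stated only on $\supp\tilde\rho$, and $\mathcal{P}$ degenerates near $|\xi'|=1$, $\xi_1=0$), so one must additionally fix $\supp\chi$ compactly inside the region where \eqref{supp of rho tilde} holds — the paper sidesteps this by instead splitting $\tilde\rho/\mathcal{P}$ additively into a compactly supported $S^{-\infty}(M)$ piece (via the cutoff $\chi_3(\xi)$) and an $S^{-\infty}(M_0)\,S^{-2}_1(M)$ piece where $\mathcal{P}$ is genuinely elliptic.
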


\begin{proof}
Standard symbol calculus defined in Section 2 does not apply as $1/{\mathcal P}(x',\xi)$ is not a proper symbol, due to the zeros of ${\mathcal P}(x',\xi)$. 

We therefore write $\t \rho(\xi')/{\mathcal P} (x',\xi) $ as
\[
 (1 - \chi_{3}(\xi))\t \rho(\xi')/{\mathcal P}(x',\xi) + \chi_{3}(\xi)\t \rho(\xi')/{\mathcal P}(x',\xi)
\]
where $\chi_{3}(\xi) \in S^{-\infty}(M)$ is a smooth cutoff which vanishes in $\{|\xi| \geq 3\}$, and $\chi_3(\xi) = 1$ in $\{|\xi| \leq 2\}$.  

Thanks to \eqref{supp of rho tilde}, ${\mathcal P}(x',\xi)$ does not vanish on $\supp(\t \rho(\xi'))$, and we can deduce that $\chi_{3}(\xi) \t \rho(\xi')/{\mathcal P} (x',\xi) \in S^{-\infty}(M)$. Moreover, since ${\mathcal P}(x',\xi)$ fails to be elliptic only inside the set where $\chi_{3} \equiv 1$, we have that $(1 - \chi_{3}(\xi))/{\mathcal P}(x',\xi) \in S^{-2}_1(M)$.  

Therefore $\frac{\t \rho}{{\mathcal P}} (x',hD)$ is understood to be the linear combination of an operator associated to a symbol in $S^{-\infty}(M)$ and an operator with symbol in $S^{-\infty}S^{-2}_1(M)$. Proposition \ref{sobolev mapping} can now be evoked to conclude that $E_s : L^r \to W^{2,r}$ and Proposition \ref{sobolev composition} asserts that  
\[
h^2(1+|df|^2)^{-1}\tilde\Delta_+\mathrm{Op}\left( \frac{\t \rho}{{\mathcal P}} \right) = \mathrm{Op}((1 - \chi_{3})\t \rho) + \mathrm{Op}(\chi_{3}\t \rho) + hR_{-1} = \mathrm{Op} (\t \rho) + R_s.
\]
\end{proof}

It turns out that $E_s$ preserves support in $M_+$.

\begin{prop}\label{SmallFrequencySupport}
If $v \in L^r(M)$ with $r\in (1,\infty)$ has support contained in the closure of $M_+$ then both $E_sv$ and $R_s v$ are supported in $\overline M_+$, where $R_s$ is as in Proposition \ref{small freq parametrix}. In particular the trace of $E_sv$ on $\{x_0 = 0\}$ vanishes.
\end{prop}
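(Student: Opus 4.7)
The plan is to express $E_s$ as an oscillatory integral and perform the $\xi_1$-integration first via residue calculus, exploiting the fact that both zeros of $\xi_1 \mapsto {\mathcal P}(x',\xi_1,\xi')$ lie in the open upper half plane on $\supp\t\rho$. In a local coordinate chart of $M_0$ the Weyl-quantized operator reads
\[
E_s v(x_1,x') = (2\pi h)^{-n}\int\!\int e^{i(x-y)\cdot\xi/h}\,\frac{\t\rho(\xi')}{{\mathcal P}(\tfrac{x'+y'}{2},\xi_1,\xi')}\,\frac{v(y_1,y')}{1+|df(y')|^2}\,dy\,d\xi,
\]
and since the symbol is independent of $x_1$ and $y_1$, one can carry out the $\xi_1$-integral separately. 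As ${\mathcal P}$ is quadratic in $\xi_1$ with $|{\mathcal P}|\sim \xi_1^2$ at infinity, the boundary of any large semicircle contributes nothing and the contour may be freely closed in either half plane.

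The heart of the argument is the claim that for every $x'\in M_0$, every $\xi'\in\supp\t\rho$, and all sufficiently small $h>0$, both roots of $\xi_1\mapsto{\mathcal P}(x',\xi_1,\xi')$ sit in the open upper half plane. At $h=0$ the quadratic formula yields $\xi_1 = \frac{(i-g_0(\xi',df))\pm ir_0}{1+|df|^2}$ with $r_0^2 = (1+ig_0(\xi',df))^2 - (1-|\xi'|^2)(1+|df|^2)$. Writing $r_0 = \alpha+i\beta$ with $\alpha,\beta\in\R$, the imaginary parts of the two roots equal $\frac{1\pm\alpha}{1+|df|^2}$, so positivity is equivalent to $\alpha^2<1$. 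Using $\alpha^2 = \tfrac12(|r_0^2|+\mathrm{Re}(r_0^2))$ together with $\mathrm{Re}(r_0^2) = 1 - g_0(\xi',df)^2 - (1-|\xi'|^2)(1+|df|^2)$ and $\mathrm{Im}(r_0^2) = 2g_0(\xi',df)$, a direct computation reduces $\alpha^2<1$ to $(1-|\xi'|^2)(1+|df|^2)>0$, which holds because $|\xi'|<1$ on $\supp\t\rho$. For small $h>0$ the property persists by continuity, using \eqref{supp of rho tilde} to rule out any root crossing the real axis.

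With the root location in hand the support statement for $E_s v$ follows immediately. If $\supp v\subset\overline{M_+}$ and $x_1<0$, then $x_1-y_1<0$ on the support of $v$; closing the $\xi_1$-contour in the lower half plane encloses no poles, so the inner integral vanishes and $E_s v\equiv 0$ on $M_-$. Since $E_sv\in W^{2,r}(M)$ by Proposition \ref{small freq parametrix} and vanishes on the open set $M_-$, continuity of the Sobolev trace across $\{x_1=0\}$ forces $E_sv|_{\{x_1=0\}}=0$.

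For $R_s v$ I would invoke the identity $R_s = h^2\t\Delta_+ E_s - (1+|df|^2)\t\rho(x',hD')(1+|df|^2)^{-1}$ from Proposition \ref{small freq parametrix}. The first summand applied to $v$ vanishes on $M_-$ because differential operators preserve vanishing on open sets and $E_sv$ does. The second summand vanishes on $M_-$ because $\t\rho$ depends only on $\xi'$, so $\t\rho(x',hD')$ acts fiber-wise in $x'$ and preserves the $x_1$-support of its argument. The hard part of this plan is the algebraic verification that both $\xi_1$-roots of ${\mathcal P}$ lie strictly in the upper half plane; once that is secured, everything else is a routine combination of residues, continuity of Sobolev trace, and the product structure $M=\R\times M_0$.
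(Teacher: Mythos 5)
Your proposal follows the same overall strategy as the paper — write $E_s$ in a coordinate chart as an oscillatory integral, evaluate the $\xi_1$-integral by residues, and deduce the support property for $R_s$ from the identity $R_s = h^2\t\Delta_+E_s-(1+|df|^2)\t\rho(x',hD')(1+|df|^2)^{-1}$ — and it is correct. The one genuine departure is how you establish that both $\xi_1$-roots of $\mathcal P$ lie strictly in the upper half plane on $\supp\t\rho$. The paper argues ``softly'': the root $a_+$ is in the upper half plane by the branch choice; for $a_-$ it observes via \eqref{supp of rho tilde} that $\mathrm{Im}(a_-)$ never vanishes, extends the branch continuously to a blowup of $\C$ along the cut, and then determines the sign of $\mathrm{Im}(a_-)$ by a single check at $\xi'=0$. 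You instead compute $\mathrm{Im}$ of both roots at $h=0$ directly, show $\mathrm{Im}>0$ is equivalent to $\alpha^2<1$ with $\alpha=\mathrm{Re}(r_0)$, and reduce this via $\alpha^2=\tfrac12\bigl(|r_0^2|+\mathrm{Re}(r_0^2)\bigr)$ to the clean inequality $(1-|\xi'|^2)(1+|df|^2)>0$, which holds because $\supp\t\rho\subset\subset B_1$; you then perturb in $h$ using \eqref{supp of rho tilde}. Your computation is more explicit (and makes transparent why compact support inside $B_1$ is exactly the needed hypothesis, independent of branch-cut worries since $\alpha^2$ is determined by $r_0^2$ alone), while the paper's argument avoids the algebra at the cost of the blowup/continuity discussion; both are sound. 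Two small cosmetic points: the sentence ``the contour may be freely closed in either half plane'' is only true when $x_1-y_1=0$; for $x_1-y_1<0$ you must close below (as you in fact do), and for $x_1-y_1>0$ you must close above. Also, as in the paper, the residue computation should first be carried out for $v\in C_c^\infty$ and then extended to $L^r$ by density together with the $W^{2,r}$-boundedness of $E_s$.
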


\begin{proof}
Let $U\subset M_0$ be a coordinate patch. It suffices to prove this statement for compactly supported smooth functions $v$ in the (infinite) strip $\R\times U = \{(x_1, x') \mid x'\in U\}$. Let $v(x_1,x')$ also denote the pullback function by the coordinate map then $\mathrm{Op}\left( \frac{\t \rho}{{\mathcal P}} \right)v(x_1,x')$ is
\begin{equation}\label{Oprhopph}
h^{-n}\int_{ \R^{n-1}}\int_{\R^{n-1}}e^{i\xi' \cdot (x'-y')/h}\int_{-\infty}^\infty \int_{-\infty}^{\infty}\frac{\t \rho(\frac{x'+y'}{2},\xi')}{{\mathcal P}(\frac{x'+y'}{2},\xi)}{v}(y_1,y')e^{i\xi_1 (x_1-y_1)/h} \, d\xi_1 dy_1 \, d\xi'dy'.
\end{equation}
We want to evaluate the inner most integral in $\xi_1$ using contour integral in $\C$. 
Since $e^{i\xi_1 (x_1-y_1)/h}$ is holomorphic, we need to find the poles of $\frac{1}{{\mathcal P}(\frac{x'+y'}{2},\xi)}$ as a polynomial in $\xi_1$. Factoring and suppressing the dependence on the spacial variable, we have
\[
{\mathcal P}(\frac{x'+y'}{2}, \xi) = -(\xi_1 - a_+)(\xi_1 - a_-)
\]
where $a_{\pm} = i\frac{(1+ig_0(df,\xi') + hF )\pm r_0}{1+|df|^2}$ with $r_0$ the square root give by \eqref{r_0}.

Therefore ${\mathcal P}(x',\xi)$, viewed as a polynomial in $\xi_1$, has two roots: $a_+$ and $a_-$. The symbol $a_+$ has positive imaginary part because $r_0$ is defined using the branch of the $\sqrt{\cdot}$ with cut along the negative real axis.  

We want to ensure that on the support of $\t\rho(\frac{x'+ y'}{2}, \xi')$ the imaginary part of $a_{-}(\frac{x'+y'}{2}, \xi')$ is strictly positive for all $h >0$ small, $x'$, $y'$, and $\xi'$. First, by \eqref{supp of rho tilde} the polynomial ${\mathcal P}(\frac{x'+y'}{2}, \xi) $ never vanishes on the support of $\t\rho(\frac{x'+ y'}{2}, \xi')$ so the imaginary part of  $a_-$ is bounded away from zero on the support of $\t\rho(\frac{x'+ y'}{2}, \xi')$ (otherwise $\xi_1$ can be chosen to make ${\mathcal P}(\frac{x'+y'}{2}, \xi)$ close to zero, contravening \eqref{supp of rho tilde}). This means that on the support of $\t\rho(\frac{x'+ y'}{2}, \xi')$, the real-valued function $1 +hF - {\rm Re}(r_0)$ stays uniformly away from zero. Note that the standard branch of $\sqrt{\cdot}$ defined on $\C \backslash \{z\in \C\mid {\rm Re}(z) \leq 0\}$ has a continuous extension onto the closed blownup manifold $[\C ; \{z\in \C\mid {\rm Re}(z) \leq 0\}] \to \C$. This means that for each fixed $h>0$ small, $x'\in M_0$, and $y'\in M_0$, the function $1 +hF - {\rm Re}(r_0)$ is either uniformly positive or uniformly negative for all $\xi'$. Choosing $\xi' = 0$, we see that $1 +hF - {\rm Re}(r_0)$ is uniformly positive. Therefore we have that ${\rm Im}(a_-) >0$ on the support of $\t\rho(\frac{x'+ y'}{2}, \xi')$ as well.

Therefore to evaluate the $\xi_1$ integral of \eqref{Oprhopph} we must use the contour integral in the upper-half of $\C$. Doing so we get  
\[
2\pi i \t \rho(\frac{x'+y'}{2},\xi')\int_{-\infty}^{x_1}\frac{v(y_1,y')(e^{ia_- (x_1-y_1)/h}-e^{ia_+ (x_1-y_1)/h})}{(a_+ - a_-)} \, dy_1
\]
for the case when $a_+ \neq a_-$. So \eqref{Oprhopph} can be written as
{\Small\begin{equation}\label{SupportForm}
2\pi i h^{-n}\int_{\R^{n-1}}\int_{\R^{n-1}}e^{i\xi' \cdot (x'-y')/h} \int_{-\infty}^{x_1}\frac{\t \rho(\xi')v(y_1,y')(e^{ia_- (x_1-y_1)/h}-e^{ia_+ (x_1-y_1)/h})}{(a_+ - a_-)} \, ds \, d\xi'dy'.
\end{equation}}
We now treat the case when $a_+$ is close to $a_-$. In the case when $a_+ = a_-$ the integral vanishes by residue calculus. In a small neighbourhood of this set we have
\[
\lim_{a_+ - a_- \rightarrow 0}\frac{e^{ia_- (x_1-y_1)/h}-e^{ia_+ (x_1-y_1)/h}}{(a_+ - a_-)} = \frac{i(x_1-y_1)}{h}e^{ia_- (x_1-y_1)/h}.
\]
Thus \eqref{SupportForm} is finite, and so if $v \in C^{\infty}_c(\R \times U)$ is supported only in $\overline M_+$, it is clear that 
\begin{equation}\label{SupportProp}
 \mathrm{Op}(\t \rho/{\mathcal P}) v(x_1,x') = 0 \mbox{ for } x_1 \leq 0.
\end{equation}
Using Proposition \ref{small freq parametrix} and boundedness of the trace operator we see that if $v\in L^r(M)$ is supported in $\overline M_+$ then $ \mathrm{Op}(\t \rho/{\mathcal P}) v(x_1,\cdot) = 0$ for $x_1 \leq 0$.

One can see that $R_s$ is supported in $\overline M_+$ by writing 
\[
(1+|df|^2)^{-1}h^2 \t \Delta_+ E_s - \t \rho(h D')(1+|df|^2)^{-1} =  hR_s
\]
and observe that the left side is supported in $\overline M_+$.

\end{proof}

\end{subsection}

\begin{subsection}{Proof of Proposition \ref{green's function}}\label{graph case}

We now turn the semiclassical parametrix constructed in Subsection \ref{large freq} into a proper inverse for $h^2\t\Delta_+$. 
By Propositions \ref{P_l remainders} and \ref{small freq parametrix} ${\bf 1}_{\t \Omega} (E_s + E_\ell) {\bf 1}_{\t \Omega}$ is a parametrix for $h^2\t\Delta_+$ in $\t \Omega$. In the semiclassical limit the remainder terms of the parametrix is sufficiently small so that one can modify it to become a resolvent of $h^2\Delta_+$.

We begin with the case where $\Gamma$ can be flattened by using coordinates given by the graph of a smooth function. Let $\Omega$ be a bounded domain in $ M$ contained in the epigraph $\{y_1 > f(y')\}$ of a smooth function $f \in C^\infty(M_0)$ and a portion of the boundary $\Gamma \subset \partial \Omega$ is contained in the graph. Use the change of variable $\gamma : (y_1,y') \mapsto (x_1 = y_1 - f(y'), x' = y')$ and define $\t \Omega := \gamma(\Omega)$. In these new coordinates $\t\Omega \subset M_+$ and $\gamma(\Gamma) \subset \{x_1 = 0\}$.

\begin{prop}
\label{graph green}
There exists a Green's function
\[G_\Gamma: L^2(\Omega)\to_{h^{-1}} L^2(\Omega),\ G_\Gamma : L^{p'}(\Omega)\to_{h^{-2}} L^p(\Omega).\] 
which satisfies the relation $h^2\Delta_+ G_\Gamma = Id$ as distributions on $\Omega$. It has the explicit representation
\[G_\Gamma = \gamma^*\big({\bf 1}_{\t \Omega}(E_s + E_\ell) {\bf 1}_{\t \Omega}  (I + R) \big)\] with the remainder $R$ having the asymptotic as $h\to 0$ given by
\[ R : L^{p'} (\t\Omega) \to_{h^0} L^2(\t \Omega),\ \ \ R : L^{2} (\t\Omega) \to_{h} L^2(\t \Omega).\]
Furthermore, $G_\Gamma v\in H^1(\Omega)$ for all $v \in L^{p'}$ with vanishing trace on $\Gamma$.
\end{prop}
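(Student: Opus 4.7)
The plan is to assemble the high-frequency parametrix $E_\ell$ and the low-frequency parametrix $E_s$ into an approximate inverse for $h^2\tilde \Delta_+$ on $\tilde \Omega$, correct the remainder by a Neumann series on $L^2(\tilde \Omega)$, and then pull back to $\Omega$ via $\gamma^*$. The key feature already built into the construction is that the ranges of both $E_\ell$ and $E_s$ vanish on $\{x_1 = 0\}$, so the Dirichlet condition on $\Gamma$ will follow automatically after pull back because $\gamma(\Gamma)\subset\{x_1=0\}$.

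Combining Propositions \ref{P_l remainders} and \ref{small freq parametrix}, the $\tilde\rho$ and $1-\tilde\rho$ principal pieces conjugated by $(1+|df|^2)^{\pm 1}$ telescope to the identity, so
\[
h^2\, {\bf 1}_{\tilde \Omega}\tilde \Delta_+\, {\bf 1}_{\tilde \Omega}(E_s+E_\ell){\bf 1}_{\tilde \Omega} v = (I + \mathcal{R})v, \qquad v\in L^{p'}(\tilde \Omega),
\]
where the combined remainder $\mathcal{R}=R_l+R_l'+R_s$ inherits $\mathcal{R}:L^2(\tilde \Omega)\to_h L^2(\tilde \Omega)$ and $\mathcal{R}:L^{p'}(\tilde \Omega)\to_{h^0} L^2(\tilde \Omega)$ from \eqref{large freq remainders} and Proposition \ref{small freq parametrix}.

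For $h$ sufficiently small the operator norm $\|\mathcal{R}\|_{L^2\to L^2}$ is strictly less than one, so the Neumann series gives $(I+\mathcal{R})^{-1}=I+R$ on $L^2(\tilde \Omega)$ with $R:L^2\to_h L^2$; the identity $R=-\mathcal{R}+\mathcal{R}R$ combined with $\mathcal{R}:L^{p'}\to_{h^0} L^2$ upgrades this to $R:L^{p'}\to_{h^0} L^2$. Setting $G_\Gamma:=\gamma^*\!\bigl({\bf 1}_{\tilde \Omega}(E_s+E_\ell){\bf 1}_{\tilde \Omega}(I+R)\bigr)$, the required $L^2\to_{h^{-1}}L^2$ and $L^{p'}\to_{h^{-2}}L^p$ bounds drop out by composing \eqref{estimates for P_l} for $E_\ell$, the mapping $E_s:L^r\to W^{2,r}$ of Proposition \ref{small freq parametrix} together with the scaling $\|\cdot\|_{W^{2,r}_{\rm cl}}\leq Ch^{-2}\|\cdot\|_{W^{2,r}_{\rm sc}}$ and the classical Sobolev embedding $W^{2,p'}\hookrightarrow L^p$, the Neumann-series bounds on $R$, and the embedding $L^{p'}(\tilde \Omega)\hookrightarrow L^2(\tilde \Omega)$ on the bounded $\tilde \Omega$ to route $L^{p'}$ inputs through the $L^2\to_h L^2$ branch of $R$ when needed. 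The $H^1$ regularity for $v\in L^{p'}$ is produced by the same compositions, and the vanishing trace of $G_\Gamma v$ on $\Gamma$ is inherited from the fact that $E_\ell v$ vanishes on $\{x_1\leq 0\}$ (last line of Proposition \ref{P_l remainders}) and $E_s v$ does so by Proposition \ref{SmallFrequencySupport}.

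The principal technical obstacle is establishing the smallness $\mathcal{R}:L^2\to_h L^2$ needed to close the Neumann series, since the $R_l'$ piece in Proposition \ref{P_l remainders} is only stated with the $L^r\to_{h^0}L^r$ bound. This requires descending into the construction via Lemma \ref{composition estimate}, where the factor $h$ in $h\tilde E_1\tilde G_+^I$ is present but has not yet been packaged into the statement of \eqref{large freq remainders}, or, alternatively, invoking compactness on the bounded domain $\tilde \Omega$ together with uniqueness for $h^2\tilde \Delta_+$ at small $h$ to replace a direct Neumann series by a Fredholm alternative. A secondary point requiring care is that the pull back $\gamma^*$ preserves all the semiclassical $L^r$ and Sobolev norms with $h$-independent constants, which follows from smoothness of $\gamma$ but must be tracked to extract the stated decay rates.
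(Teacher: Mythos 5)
Your overall plan --- assemble $E_s+E_\ell$ into a parametrix on $\tilde\Omega$, invert the remainder by a Neumann series, then pull back via $\gamma^*$ and read the Dirichlet condition off the $M_+$-support properties --- coincides with the paper's. You also correctly flag that the factor of $h$ in $h\tilde E_1\tilde G_+^I$ is what must be exploited to make $R_l'$ genuinely $O(h)$ on $L^r$ (the statement \eqref{large freq remainders} understates this and the paper's proof uses the improved bound $R_s+R_l':L^r\to_h L^r$ tacitly).

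However, there is a genuine gap in the Neumann-series step. You collapse the paper's two-step inversion (first invert $1+R_s+R_l'$ on every $L^r$ by Neumann series to get $S$, and only then invert $1+R_lS$ on $L^2$) into a single series for $\mathcal R=R_s+R_l+R_l'$ on $L^2$, and then claim $\mathcal R:L^{p'}\to_{h^0}L^2$. This last mapping property is false: $R_s$ and $R_l'$ map $L^{p'}\to_h L^{p'}$, not into $L^2$, and since $p'=\tfrac{2n}{n+2}<2$ there is no inclusion $L^{p'}(\tilde\Omega)\subset L^2(\tilde\Omega)$ on a bounded domain --- the inclusion you invoke, $L^{p'}(\tilde\Omega)\hookrightarrow L^2(\tilde\Omega)$, goes the wrong way. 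Consequently $\mathcal Rv$ for $v\in L^{p'}$ lands in $L^2+L^{p'}$ rather than $L^2$, and the ``upgrade'' identity $R=-(I+R)\mathcal R$ becomes circular: evaluating it at $v\in L^{p'}$ requires $R$ to already be defined on the $L^{p'}$-component $(R_s+R_l')v$, which is precisely what you are trying to construct. The paper avoids this by first killing the $L^r\to_h L^r$ piece $R_s+R_l'$ via $S:=(1+R_s+R_l')^{-1}$ (valid on every $L^r$), so that the residual remainder $R_lS$ does map $L^{p'}\to_{h^0}L^2$ and $L^2\to_h L^2$, and only then runs the second Neumann series, yielding $(I+R_lS)^{-1}:L^{p'}\to_{h^0}L^2+L^{p'}$. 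Either adopt that two-step factorization, or work systematically in the sum space $L^2+L^{p'}$ and track the two components of $\mathcal R^kv$ separately (which amounts to the same bookkeeping); as written, your single-step argument does not close.
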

\begin{proof}
Change coordinates $(y_1,y') \mapsto (x_1,x')$ so that $\t \Gamma := \gamma(\Gamma)\subset \{x_1 = 0\}$ and let $\tilde \Delta_+$ be the pulled-back conjugated Laplacian. By Proposition \ref{P_l remainders} and Proposition \ref{small freq parametrix}, 
$$h^2\tilde\Delta_+  {\bf 1}_{\t \Omega} (E_s + E_\ell) {\bf 1}_{\t \Omega} = I + R_s + R_l + R_l'$$
with $R_s + R_l'$ mapping $L^r(\t\Omega) \to_{h} L^r(\t \Omega)$. Let $S :L^r(\t\Omega)\to L^r(\t\Omega)$ denote the inverse of $(1 +  R_s + R_l')$ by Neumann sum. This yields in $\t \Omega$ 
\[h^2\tilde\Delta_+ {\bf 1}_{\t \Omega}(E_s + E_\ell){\bf 1}_{\t \Omega}S = I + R_l S.\]
By Proposition \ref{P_l remainders}
we have $R_lS : L^2(\t\Omega)\to_{h} L^2(\t\Omega)$ while $R_lS : L^{p'}(\t\Omega) \to_{h^0} L^2(\t\Omega)$. Therefore we can invert by Neumann series again to obtain a right inverse for $h^2\t\Delta^+$ of the form ${\bf 1}_{\t \Omega}(E_s + E_\ell){\bf 1}_{\t \Omega}S(I+ R_lS)^{-1}$ where 
$$(I + R_l S)^{-1} : L^2\to_{h^0} L^2,\ \ (I + R_l S)^{-1} : L^{p'} \to_{h^0} L^2 + L^{p'}.$$
Changing variables we see that $h^2\Delta_+ G_\Gamma = Id$ by setting 
$$G_\Gamma := \gamma^*\big({\bf 1}_{\t \Omega}(E_s + E_\ell) {\bf 1}_{\t \Omega} S(1 + R_l S)^{-1}\big).$$
The mapping properties and Dirichlet boundary condition follows then from the analogous properties for $E_\ell$ and $E_s$ outlined in Propositions \ref{small freq parametrix}, \ref{SmallFrequencySupport}, \ref{P_l remainders}, and \eqref{estimates for P_l}.
\end{proof}
To prove Proposition \ref{green's function} in the general case, we patch together Green's functions as \cite{chung-tzou}. Let $\Gamma$ be a closed and connected component of $\partial \Omega$ contained in an open set $\Omega_\Gamma \subset M$ such that there exists $f\in C^\infty(M_0)$ for which $\Omega_\Gamma \cap \partial M \subset \{y_1 \geq f(y')\}$ and $\partial M \cap \Omega_\Gamma \cap \{ y_1 = f(y')\} = \Gamma$. We may choose $\Omega_\Gamma$ small enough such that $\Omega_\Gamma\cap \Omega$ is contained in the epigraph of $f$. If a compact connected component of $\partial \Omega$ satisfies this condition, we say that {\em $\Gamma$ is compatible with a smooth function $f$}.

Choose $\chi\in C^\infty_c(\Omega_\Gamma)$ such that $\chi =1$ on $\Gamma$ and define ${\mathcal O} := \Omega_\Gamma \cap \{y_1> f(y')\}$. By the fact that $(\Omega_\Gamma \cap \partial M)\backslash \Gamma$ lies strictly above the graph $y_1 = f(y')$, we can arrange $\chi$ so that
\begin{eqnarray}
\label{support of Dchi}
\exists \epsilon > 0 \mid {\supp}({\bf 1}_{\Omega} D\chi) \subset\{ (y_1,y')\mid y_1 \geq f(y') + \epsilon\}.
\end{eqnarray}
Choose $I\subset \R$ such that $\Omega$ is contained in $I\times M_0$ and let $G_+^I$ be the Green's function defined via \eqref{green on M}. Let $G_\Gamma$ now the Green's function constructed in Proposition \ref{graph green} for the domain $\mathcal O$ with vanishing Dirichlet condition along the portion $\{y_1 = f(y')\}$.

Now let $\Pi_\Gamma := \chi {\bf 1}_\Omega (G_+^{I} - G_\Gamma){\bf 1}_{\mathcal O}$ and see that it satisfies the estimates
\begin{eqnarray}\label{Pi estimates}
\Pi_\Gamma : L^{p'}(\Omega) \to_{h^{-2}} L^p(\Omega),\ \ \Pi_\Gamma : L^{2}(\Omega) \to_{h^{-1}} H^1(\Omega).\end{eqnarray}

Proposition \ref{graph green} yields that $\Pi_\Gamma v \in H^1(\Omega)$ and 
\begin{eqnarray}
\label{same trace} ((\Pi_\Gamma v)-  (G_+^I v))\mid_{\Gamma}= 0
\end{eqnarray}
for all $v\in L^{p'}(\Omega)$.
\begin{lemma}
\label{remainder of difference in green}
One has the estimates 
\[
h^2\Delta_+ {\bf 1}_\Omega\Pi_\Gamma : L^{p'}(\Omega)  \to_{h^0} L^2(\Omega),\ \ h^2\Delta_+ {\bf 1}_\Omega\Pi_\Gamma  : L^{2}(\Omega) \to_{h^1} L^2(\Omega).
\]
\end{lemma}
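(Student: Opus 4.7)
The plan is to use Leibniz to reduce $h^2\Delta_+ {\bf 1}_\Omega \Pi_\Gamma$ to a single commutator term, then exploit the disjoint-support Lemma \ref{P_l disjoint support} together with the explicit parametrix form of $G_\Gamma$ from Proposition \ref{graph green}. Since $\tilde\chi \equiv 1$ on $I\times M_0 \supset \Omega$, one has $h^2\Delta_+ G_+^I v = v$ distributionally on $\Omega$ for $v\in L^{p'}(\Omega)$, while $h^2\Delta_+ G_\Gamma w = w$ on $\mathcal O$ by Proposition \ref{graph green}. Because $\chi\in C_c^\infty(\Omega_\Gamma)$ with $\supp(\chi)\cap\Omega \subset \mathcal O$, applying Leibniz to $\Pi_\Gamma v = \chi(G_+^I - G_\Gamma){\bf 1}_{\mathcal O}v$ collapses the uncommuted term, leaving
\[
h^2\Delta_+ {\bf 1}_\Omega \Pi_\Gamma v = [h^2\Delta_+,\chi](G_+^I - G_\Gamma){\bf 1}_{\mathcal O} v
\]
as distributions on $\Omega$.

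Next I unpack the commutator: from $h^2\Delta_+ = h^2\Delta_g + 2h\partial_{y_1}+1$ one finds $[h^2\Delta_+,\chi] = h\tilde A(y,hD)$, where $\tilde A$ is a first-order semiclassical differential operator whose coefficients are supported in $\supp(D\chi)$. By \eqref{support of Dchi} these coefficients (restricted to $\Omega$) lie in $\{y_1\geq f(y')+\epsilon\}$, and pulling back via $\gamma$ places them in $\{x_1\geq\epsilon\}$ --- precisely the disjoint-support setting of Lemma \ref{P_l disjoint support}. I then substitute $\tilde G_\Gamma = {\bf 1}_{\tilde{\mathcal O}}(E_s + E_\ell){\bf 1}_{\tilde{\mathcal O}} T$ from Proposition \ref{graph green} and decompose
\[
\tilde G_+^I - \tilde G_\Gamma = (\tilde G_+^I - E_\ell) - {\bf 1}_{\tilde{\mathcal O}} E_s {\bf 1}_{\tilde{\mathcal O}} + (E_\ell - {\bf 1}_{\tilde{\mathcal O}} E_\ell {\bf 1}_{\tilde{\mathcal O}}) - {\bf 1}_{\tilde{\mathcal O}}(E_s + E_\ell){\bf 1}_{\tilde{\mathcal O}}(T - I).
\]

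The first piece is handled directly by Lemma \ref{P_l disjoint support}, which gives $h\tilde A(\tilde G_+^I - E_\ell){\bf 1}_{\tilde{\mathcal O}}: L^2\to_h L^2$ and $L^{p'}\to_{h^0} L^2$. For the $E_s$ piece, Proposition \ref{small freq parametrix} provides $E_s: L^r \to W^{2,r}$, and combining with the semiclassical Sobolev embedding $W^{1,p'}\to_{h^{-1}}L^2$ (which reflects the $h^{-1}$ loss when passing from classical to semiclassical norms) yields $h\tilde A E_s: L^2\to_h L^2$ and $L^{p'}\to_{h^0}L^2$. The indicator-function boundary term $E_\ell - {\bf 1}_{\tilde{\mathcal O}}E_\ell{\bf 1}_{\tilde{\mathcal O}}$ sits in a disjoint-support configuration controllable by the same techniques as Lemma \ref{disjoint support for J inverse}.

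The main obstacle is the $(T-I)$ correction. Unfolding the Neumann construction, $T - I = (S - I)(I + R_l S)^{-1} - (I + R_l S)^{-1} R_l S$; applying Proposition \ref{P_l remainders} along with the bound $R_s + R_l':L^r\to_h L^r$ from the proof of Proposition \ref{graph green}, one obtains $T - I: L^2\to_h L^2$ and $T - I: L^{p'}\to_{h^0} L^{p'}+L^2$, with the $L^{p'}$-component of the output carrying an extra factor of $h$. Composing with $E_\ell : L^2\to_{h^{-1}}H^1$ (and $L^{p'}\to_{h^{-2}}H^1$), $E_s: L^r\to W^{2,r}$, and finally $h\tilde A:H^1\to_h L^2$, a careful bookkeeping of $h$-powers ($h\cdot h^{-1}\cdot h = h$ in $L^2\to L^2$, and either $h\cdot h^{-2}\cdot h = h^0$ or $h^0\cdot h^{-1}\cdot h = h^0$ in $L^{p'}\to L^2$) delivers the required bounds. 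Transferring back to $y$-coordinates via $\gamma$ completes the proof.
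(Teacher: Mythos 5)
Your proof is correct and follows essentially the same route as the paper's: reduce $h^2\Delta_+{\bf 1}_\Omega\Pi_\Gamma$ to the commutator $[h^2\Delta_+,\chi](G^I_+-G_\Gamma){\bf 1}_{\mathcal O}$, pass to the flattened coordinates, dispatch the $\tilde G^I_+-E_\ell$ piece via Lemma~\ref{P_l disjoint support}, the $E_s$ piece via Proposition~\ref{small freq parametrix} and the semiclassical Sobolev embedding $W^{2,p'}\hookrightarrow_{h^{-1}}H^1$, and control the Neumann-series correction using the remainder bounds from Propositions~\ref{P_l remainders} and~\ref{small freq parametrix}. The only cosmetic difference is that you spell out the decomposition more granularly (explicitly isolating the indicator-commutation term $E_\ell-{\bf 1}_{\tilde{\mathcal O}}E_\ell{\bf 1}_{\tilde{\mathcal O}}$ and unfolding $T-I$ algebraically with its $h$-bookkeeping), where the paper absorbs these into a single error $E$ in one stroke; the substance and the key lemmas invoked are the same.
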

Assuming this Lemma, let $\Omega$ be a domain in $M$ with smooth boundary and $\Gamma\subset \partial\Omega$ be a compact set contained in $\{y\in \partial\Omega \mid g( \partial_{y_1} ,\nu(y))>0\}$. Since $\Omega \subset I\times M_0$ for some simple manifold $M_0$ we may write $\Gamma $ as the disjoint union $\bigcup\limits_j \Gamma_j$ of connected compact components $\Gamma_j$ each of which is compatible with a smooth function $f_j$.

For each $\Gamma_j$ let $\Pi_{\Gamma_j}$ and $\chi_j$ be 
as before. By \eqref{same trace} we have
\[\Big( G^I_+ v- \sum_{j = 1}^k \Pi_{\Gamma_j} v\Big)\mid_\Gamma = 0\]
for every $v\in L^{p'}(\Omega)$. Lemma \ref{remainder of difference in green} yields that  $h^2\Delta_+ {\bf 1}_\Omega\big(G^I_+ - \sum_{j = 1}^k \Pi_{\Gamma_j} \big){\bf 1}_{\Omega}$ is identity plus an operator $R'$ which takes 
$ L^2(\Omega)\to_{h} L^2(\Omega)$ and $L^{p'}(\Omega)\to_{h^0} L^{2}(\Omega)$.
Observe that we can as before find an inverse $(1+ R')^{-1} : L^2 \cup L^{p'} \to_{h^0} L^2$. Proposition \ref{green's function} is now complete by the estimates of \eqref{Pi estimates} and \eqref{DosKenSal estimate}. 
All that remains is:

\begin{proof}[Proof of Lemma \ref{remainder of difference in green}]
By Proposition \ref{graph green} we have $G_{\Gamma}$ is a right inverse for $h^2\Delta_+$ in $\mathcal O$, and $\supp(\chi {\bf 1}_\Omega) \subset \mathcal O$, so $\chi h^2 \Delta_+ {\bf 1}_{\Omega} G_{\Gamma}v(y) = \chi v(y)$ in $\Omega$. By construction $h^2 \Delta_+G_+^I = Id$ in $ \Omega$, so $h^2\Delta_+ {\bf 1}_{\Omega} G^I_+v = v$. Therefore as a distribution acting on $C^\infty_c(\Omega)$, the only term in $h^2\Delta_+\Pi_\Gamma v(y)$ is $[h^2\Delta_+, \chi_j(y)] {\bf 1}_\Omega(G_+^I - G_{\Gamma_j}){\bf 1}_{\mathcal O}v(y)$. The operator $\Pi_\Gamma$ is defined using finitely many partitions so we may assume that the sum contains only one term given by $\chi_j = \chi$.

To treat this we make a change of variable by $(y_1,y')\mapsto (x_1 = y_1 - f(y'),x' = y' )$ and denote with a tilde the quantities obtained by pushing forward with this coordinate change. Using Proposition \ref{graph green} we see that this term is of the form
\[
[h^2\t\Delta_+, \t\chi(x)]{\bf 1}_{\t \Omega} (\t G_+^I - (E_s + E_\ell) {\bf 1}_{\t \Omega} (I + R)){\bf 1}_{\t{\mathcal O} }
\]
where $R$ takes $ L^{p'}(\t\Omega) \to_{h^0} L^2(\t\Omega$ and $L^2(\t\Omega)\to_{h} L^2(\t\Omega)$.
Compute $[h^2\t\Delta_+, \t\chi]$ explicitly and use the estimates in Proposition \ref{small freq parametrix} and \eqref{estimates for P_l} we see that 
{\Small\begin{equation}\label{LastMultiGraphTerm}
[h^2\t\Delta_+, \t\chi]{\bf 1}_{\t \Omega} (\t G_+^I - (E_s + E_\ell) {\bf 1}_{\t \Omega} S(1 + R_l S)^{-1} ) {\bf 1}_{\t{\mathcal O} } = [h^2\t\Delta_+, \t\chi]{\bf 1}_{\t \Omega} (\t G_+^I - (E_s + E_\ell)) {\bf 1}_{\t \Omega}S {\bf 1}_{\mathcal O} + E 
\end{equation}}
where $E$ is an error with estimates $E: L^{p'}(\t \Omega) \to_{h^0} L^2(\t \Omega)$ and $E: L^{2}(\t \Omega) \to_{h^1} L^2(\t \Omega)$.
This error has the desirable estimates so it remains only to analyze the first term of \eqref{LastMultiGraphTerm} given by $[h^2\t\Delta_+, \t\chi]{\bf 1}_{\t \Omega} (\t G^I_+ - (E_s + E_\ell)) {\bf 1}_{\t \Omega}S  {\bf 1}_{{\mathcal O}}$.

As distributions acting on $C^\infty_c(\t\Omega)$, the first order differential operator  $[h^2\t\Delta_+ \t\chi]$ commutes with the indicator function ${\bf 1}_{\t \Omega}$ which gives us
\[[h^2\t\Delta_+, \t\chi]{\bf 1}_{\t \Omega} (\t G_+^I- (E_s + E_\ell)) {\bf 1}_{\t \Omega}=
{\bf 1}_{\t \Omega} [h^2\t\Delta_+, \t\chi](\t G_+^I - E_\ell){\bf 1}_{\t \Omega}. -  {\bf 1}_{\t \Omega} [h^2\t\Delta_+, \t\chi]E_s{\bf 1}_{\t \Omega}.
\]
Now $E_s$ maps $L^2\to_{h^0}H^2$ and $L^{p'}\to W^{2,p'} \hookrightarrow_{h^{-1}} H^1$. This is composed with the commutator $[h^2\t\Delta_+, \t\chi]$ which maps $H^1$ to $L^2$ with the gain of $h$. As such the term involving $E_s$ has the desirable asymptotic as $h\to 0$.  The remaining term to treat is ${\bf 1}_{\t \Omega} [h^2\t\Delta_+, \t\chi](\t G_+^I - E_\ell) {\bf 1}_{\t \Omega} $. By \eqref{support of Dchi} the commutator ${\bf 1}_{\t \Omega}[h^2\t\Delta_+, \t\chi] $ is a first order differential operator whose coefficients vanishing in $\{x_1 \leq \epsilon\}$. Lemma \ref{P_l disjoint support} can then be applied to give us the necessary estimates for this term.
\end{proof}
\end{subsection}
\end{section}

\begin{section}{Complex Geometrical Optics }
Let $M = \R \times M_0$ and $g = dy_1^2 + g_0$ be a metric on $M$. Consider the bounded domain $\Omega\subset  M$ and let $\Gamma \subset \partial \Omega$ be an open set compactly contained in $\{y\in \partial \Omega\mid g (\nu(x),\partial_{y_1})>0\}$ where $\nu_n $ the outward normal. By Proposition \ref{green's function} there exists a resolvent $G_\Gamma$ for $h^2\Delta_\phi$ whose trace along$\Gamma$ vanishes and \[G_\Gamma: L^2(\Omega) \to_{h^{-1}} L^2(\Omega),\ \ \ G_\Gamma : L^{p'}(\Omega)\to_{h^{-2}} L^{p}(\Omega).\]

\begin{subsection}{Application of Green's Function to Solvability}

In the geometric setting described above, we can use the same argument as in \cite{DosKenSal} to prove the following \begin{prop}
\label{solve for rhs}
Let $L\in L^2$ satisfy $\|L\|_{L^2} \leq Ch^2$, and let $q\in L^{n/2}(\Omega)$. If $a = a_h\in L^\infty$ is a uniformly bounded family of functions in $h$, one can find a solution to
\begin{eqnarray}
e^{-y_1/h} h^2(\Delta_g + q) e^{y_1/h}r = h^2 q a + L\ \ \ r\mid_{\Gamma} = 0.\end{eqnarray}
The solution $r$ satisfies the asymptotic $\|r\|_{L^2} \leq o(1)$ and $\|r\|_{L^p} \leq O(1)$ as $h\to0$.
\end{prop}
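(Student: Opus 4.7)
My plan is to use the Dirichlet Green's function $G_\Gamma$ from Proposition \ref{green's function} to turn the PDE into an integral equation. Since multiplication by $q$ commutes with $e^{\pm y_1/h}$, the conjugated equation reads $h^2\Delta_+ r + h^2 q r = h^2 qa + L$ with $r|_\Gamma = 0$, and the ansatz $r = G_\Gamma \rho$ enforces the Dirichlet condition automatically. This is equivalent to finding a fixed point $r \in L^p(\Omega)$ of
\begin{equation*}
r = G_\Gamma F - h^2 G_\Gamma(qr),\qquad F := h^2 qa + L,
\end{equation*}
and the embeddings $L^{n/2}, L^2 \hookrightarrow L^{p'}$ on the bounded domain yield $\|F\|_{L^{p'}} \leq Ch^2$.

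The main technical step will be to show that $T := h^2 G_\Gamma q$ is a strict contraction on $L^p$ for $h$ small. I would split $q = q_1 + q_2$ with $q_1 := q\, {\bf 1}_{|q|\leq M_h}$ and $q_2 := q - q_1$, where $M_h \to \infty$ is chosen slowly enough that $hM_h \to 0$; by dominated convergence $\epsilon_h := \|q_2\|_{L^{n/2}} \to 0$. Using H\"older and the stated bound $G_\Gamma : L^{p'} \to_{h^{-2}} L^p$ yields $\|h^2 G_\Gamma q_2\|_{L^p\to L^p} \leq C\epsilon_h$. For the $q_1$-piece one exploits the sharper mapping $G_\Gamma : L^2 \to_{h^{-1}} L^p$, obtained from the $L^2 \to_{h^{-1}} H^2$ bound of \eqref{DosKenSal estimate} (implicit in the construction of $G_\Gamma$ via Lemma \ref{cut up G}) and the Sobolev embedding $H^2 \hookrightarrow L^p$, composed with $L^p \hookrightarrow L^2$ on the bounded domain; the resulting bound is $\|h^2 G_\Gamma q_1\|_{L^p\to L^p} \leq C hM_h$. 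Hence $\|T\|_{L^p\to L^p} \leq C(\epsilon_h + hM_h) < 1/2$ for $h$ small, and the Neumann iteration starting from $r_0 = 0$ produces the desired fixed point $r \in L^p$ with $\|r\|_{L^p} \leq 2\|G_\Gamma F\|_{L^p} \leq C h^{-2}\|F\|_{L^{p'}} \leq C$; the boundary condition $r|_\Gamma = 0$ is inherited from $G_\Gamma$.

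For the sharper $\|r\|_{L^2} = o(1)$ asymptotic, plug the fixed point back into the equation and estimate term-by-term. The $L^2$-pieces ($L$, $q_1 a$, $q_1 r$) are handled by $G_\Gamma : L^2 \to_{h^{-1}} L^2$, gaining one factor of $h$ and contributing $O(hM_h)$. The $L^{p'}$-pieces ($q_2 a$, $q_2 r$) are handled by $G_\Gamma : L^{p'} \to_{h^{-2}} L^p \hookrightarrow L^2$, contributing $O(\epsilon_h)$. Summing gives $\|r\|_{L^2} = O(hM_h + \epsilon_h) = o(1)$. The most delicate step is verifying the unstated semiclassical bound $G_\Gamma : L^2 \to_{h^{-1}} L^p$ that drives the $q_1$-contraction; this amounts to propagating the $L^2 \to_{h^{-1}} H^2$ estimate of \eqref{DosKenSal estimate} through the gluing construction of Section 4, and is the only step that is not essentially immediate from the estimates already recorded in Proposition \ref{green's function}.
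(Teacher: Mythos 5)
There is a gap at the step you yourself flag as ``the most delicate'': the claimed bound $G_\Gamma : L^2 \to_{h^{-1}} L^p$ does not follow from the $L^2 \to_{h^{-1}} H^2$ estimate plus Sobolev embedding. The Sobolev spaces $W^{k,r}$ in this paper are \emph{semiclassical}, $\|u\|_{W^{k,r}} = \|\langle hD\rangle^k u\|_{L^r}$, and the embedding into $L^p$ with $p=\frac{2n}{n-2}$ costs a full power of $h^{-1}$: from $\|u\|_{L^p} \le C\|Du\|_{L^2} = C h^{-1}\|hDu\|_{L^2}$ one only gets $H^1_{\rm scl}\hookrightarrow_{h^{-1}} L^p$. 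Composing with the $L^2\to_{h^{-1}} H^2_{\rm scl}$ bound therefore yields $G_\Gamma : L^2 \to_{h^{-2}} L^p$, not $h^{-1}$ --- the same exponent as the $L^{p'}\to L^p$ bound already in Proposition~\ref{green's function}, so there is no gain from routing the bounded piece $q_1 r$ through $L^2$. (Equivalently: $\|hDG^M f\|_{L^2}\le Ch^{-1}\|f\|_{L^2}$ gives $\|DG^M f\|_{L^2}\le Ch^{-2}\|f\|_{L^2}$, and Gagliardo--Nirenberg on this is $h^{-2}$.) With the corrected exponent, your $q_1$-contraction estimate becomes $\|h^2 G_\Gamma q_1\|_{L^p\to L^p}\le C M_h$, which is not small, and the Neumann iteration on $L^p$ collapses.

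The paper simply defers to the argument of \cite{DosKenSal} (their Proposition~2.4), which avoids this obstruction by not attempting a pure $L^p$ contraction; the inversion is carried out in a weighted mixed space of the form $\|r\|_{L^p}+\tau\|r\|_{L^2}$ (with $\tau=1/h$), so that the $q_1$-piece is handled entirely by the $L^2\to_{h^{-1}} L^2$ estimate (giving the $hM_h$ gain you want) while the $q_2$-piece is handled by the $L^{p'}\to_{h^{-2}} L^p$ estimate, and the two are coupled through the weighted norm rather than through a single $L^p$ bound. Your a posteriori derivation of $\|r\|_{L^2} = o(1)$, which only ever uses the two mapping properties actually stated in Proposition~\ref{green's function}, is fine once the existence and $L^p$ bound are secured; it is the existence step that needs to be redone along the lines of \cite{DosKenSal}, or else you must supply a genuine proof of an improved $L^2\to L^p$ bound for $G_\Gamma$, which Sobolev embedding alone does not give.
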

Observe that we can generalize this to metrics which are conformal to $dy_1^2\oplus g_0$ (i.e. CTA metrics). Indeed, if $c^{-1}g$ is a metric conformal to $g = dy_1^2 \oplus g_0$ then one can write as in \cite{dksu} the Schr\"odinger operator for $c^{-1}g$ as
\[c^{\frac{n+2}{4}} (\Delta_g + q) u =( \Delta_{c^{-1}g} + q_c )(c^{\frac{n-2}{4}}u ) \]
where $q_c := cq + c^{\frac{n+2}{4}}\Delta_g c^{-\frac{n-2}{4}} \in L^{n/2}(\Omega)$. Therefore Proposition \ref{solve for rhs} immediately generalizes to metrics which are conformal to $dy_1^2 \oplus g_0$. 
\begin{cor}
\label{solve for rhs cor}

Let $g$ be a CTA metric on $M = \R\times M_0$. Let $\Omega \subset M$ be a bounded open subset and $\Gamma\subset\subset \{y\in \partial M \mid g(\partial_{y_1}, \nu(y)) >0\}$. For all $L \in L^2(\Omega)$ with $\|L\|_{L^2} \leq Ch^2$, $q\in L^{n/2}(\Omega)$, and $a = a_h\in L^\infty$ with $\|a_h\|_{L^\infty}$ uniformly bounded as $h\to 0$, there exists a solution of 
\begin{eqnarray}
\label{solve}
e^{-y_1/h} h^2(\Delta_{g} + q) e^{y_1/h}r = h^2 q a + L\ \ \ r\mid_{\Gamma} = 0\end{eqnarray}
with asymptotic given by $\|r\|_{L^2} \leq o(1)$ and $\|r\|_{L^p} \leq O(1)$ as $h\to 0$.
\end{cor}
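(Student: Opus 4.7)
The plan is to reduce to Proposition~\ref{solve for rhs} via the conformal change of metric already alluded to in the paragraph preceding the corollary. Since $g$ is CTA, write $g = c^{-1}\hat g$ where $\hat g := dy_1^2 \oplus g_0$ and $c \in C^\infty(M)$ is a positive conformal factor. The identity
\[
c^{(n+2)/4}(\Delta_{\hat g} + q')v = (\Delta_g + Q)(c^{(n-2)/4} v), \qquad Q = c q' + c^{(n+2)/4}\Delta_{\hat g} c^{-(n-2)/4},
\]
lets me convert the Schr\"odinger operator for $g$ into the one for the product metric $\hat g$, to which Proposition~\ref{solve for rhs} applies.

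First I would set $Q := q$ (the given potential) and solve for the corresponding product-metric potential
\[
q' := c^{-1} q - c^{(n-2)/4}\Delta_{\hat g} c^{-(n-2)/4},
\]
which lies in $L^{n/2}(\Omega)$ since $q\in L^{n/2}$ and $c$ is smooth. I would then look for $r$ of the form $r =: c^{(n-2)/4}\tilde r$. Since multiplication by $c$ commutes with multiplication by $e^{\pm y_1/h}$, the equation \eqref{solve} divided by $c^{(n+2)/4}$ becomes
\[
e^{-y_1/h} h^2 (\Delta_{\hat g} + q') e^{y_1/h} \tilde r = c^{-(n+2)/4}(h^2 q a + L).
\]

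The key computational step is to rewrite the right-hand side in the form required by Proposition~\ref{solve for rhs}. From the expression for $Q$ one has $c^{-(n+2)/4}q = c^{(2-n)/4} q' + \Delta_{\hat g} c^{-(n-2)/4}$, hence
\[
c^{-(n+2)/4}(h^2 q a + L) = h^2 q' \tilde a + \tilde L,
\]
where $\tilde a := c^{(2-n)/4} a$ and $\tilde L := h^2(\Delta_{\hat g} c^{-(n-2)/4})a + c^{-(n+2)/4}L$. Then $\tilde a$ is uniformly bounded in $L^\infty$, and $\|\tilde L\|_{L^2}\le C h^2$ using $\|L\|_{L^2}\le C h^2$, uniform boundedness of $a$, and smoothness of $c$. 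Proposition~\ref{solve for rhs} therefore yields $\tilde r$ solving the product-metric equation with $\tilde r|_\Gamma = 0$, $\|\tilde r\|_{L^2}=o(1)$, and $\|\tilde r\|_{L^p}=O(1)$.

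Undoing the substitution, $r := c^{(n-2)/4}\tilde r$ solves the original equation \eqref{solve}. Since $c$ is smooth and bounded above and below on $\overline \Omega$, multiplication by $c^{(n-2)/4}$ is an isomorphism on both $L^2$ and $L^p$, so the asymptotics for $\tilde r$ transfer directly to $r$, and $r|_\Gamma = c^{(n-2)/4}|_\Gamma\cdot \tilde r|_\Gamma = 0$. The only point requiring real care is the bookkeeping just carried out: confirming that the extra \emph{smooth} term coming from $Q = cq' + c^{(n+2)/4}\Delta_{\hat g} c^{-(n-2)/4}$ can be absorbed into $\tilde L$ as an $O(h^2)$ remainder in $L^2$ rather than surviving as an unwanted factor multiplying $\tilde a$. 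Beyond this, there is no serious obstacle; the corollary really is an immediate consequence of Proposition~\ref{solve for rhs} together with the conformal identity of \cite{dksu}.
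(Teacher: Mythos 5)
Your proof is correct and follows the same route the paper takes: the paper proves the corollary only by pointing to the conformal identity of \cite{dksu} and asserting the generalization is immediate, while you have carried out the same reduction explicitly, verifying that the right-hand side becomes $h^2 q' \tilde a + \tilde L$ with $\tilde a \in L^\infty$ uniformly bounded and $\|\tilde L\|_{L^2} = O(h^2)$ so that Proposition \ref{solve for rhs} applies. The bookkeeping, the transfer of the trace condition, and the transfer of the $L^2$/$L^p$ asymptotics via smoothness and positivity of $c$ are all handled correctly.
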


\end{subsection}
\begin{subsection}{CGO In Conformally Transversally Anisotropic Manifold}
We first construct the CGO ansatz following the method of \cite{ksu}. Assume that $\Omega\subset \R\times \Omega_0$ where $\Omega_0 \subset M_0$ is a simple manifold compactly contained in a slightly larger simple manifold $\hat \Omega_0$. Let $\omega \in \hat \Omega_0\backslash\Omega$ and set $(t,\theta)$ to be spherical coordinate around this point. The metric in these coordinates is $g =c( dy_1^2 \oplus dt^2 \oplus g_0')$ so $\phi + i\psi = y_1 + it$ solves the eikonal equation
\[g(d \phi + id\psi, d\phi + id\psi) = 0.\]
We can solve the transport equation $g(d \phi + i \psi,da ) + \Delta_g (\phi+i \psi) = 0$, by setting 
\begin{eqnarray}
\label{amplitude}
a = |g|^{-1/4}  e^{i(y_1 + it)\lambda} \beta (\theta)
\end{eqnarray}
where $\beta$ is any smooth function on $S^{n-2}$. With $\phi$, $\psi$, and $a$ chosen as such we have 
\[e^{-(\phi + i\psi)/h} \Delta_g e^{(\phi + i\psi)/h} a = O_{L^2}(h^2).\]
We now need to construct a reflection term $e^{\ell/h}b$ which kills $e^{(\phi + i\psi)/h} a$ on $\Gamma$. As before we will construct $\ell$ supported near $\Gamma$ solving the approximate equation
\begin{eqnarray}
\label{approx eikonal}
g(d \ell ,d\ell)\mid_{z} = dist(z, \Gamma)^\infty
\end{eqnarray}
with boundary condition $\ell\mid_{\Gamma} = (\phi + i \psi)\mid_{\Gamma}$ and $ \partial_\nu\ell\mid_{\Gamma} = -\partial_\nu(\phi + i \psi)\mid_{\Gamma}$. 
The construction will be localized so we may assume without loss of generality that $\Gamma$ is compactly contained in single connected component of $\{ y\in \partial \Omega \mid g(\nu(y), \partial_{y_1}) >0\}$. Using boundary normal coordinates $(s,z') \in \R_+ \times \partial M$ near $\Gamma$, we may express the metric as $ds^2 \oplus g'$ for some symmetric two tensor $g'$ which annihilates $\partial_s$ where $s$ is the distance away from the boundary. Note that in a small neighbourhood of $\Gamma$, $g(ds, d\phi) \geq \epsilon$ which allows us to solve for individual terms of the formal expansion 
\[\ell(s,z') = \sum\limits_{j = 0}^\infty s^j \ell_j(z'),\ \ \ell_0(z') = \phi(0,z') + i\psi(0,z'),\ \ \ell_1(z') = -\partial_\nu(\phi + i\psi)(0,z')\]
so that \eqref{approx eikonal} is satisfied. Borel Lemma can then be employed to construct $\ell(s,z')$ solving \eqref{approx eikonal}. Similarly, the approximate transport equation 
\[g(d\ell, db) = d(z,\partial M)^\infty \ \ \ b \mid_\Gamma= -a\mid_\Gamma\]
can also be solved iteratively using formal power series and the fact $|g(ds, d\ell) |\geq \epsilon$ near $\Gamma$. Since we are only interested in the behaviour of $b$ at and near $\Gamma$, we may construct it to be supported in a small neighbourhood of $\Gamma$.

The ansatz given by $e^{\ell/h} b$ satisfies $e^{-\phi/h} h^2\Delta_g (e^{\ell/h} b) = e^{(\ell - \phi)/h}(O(dist(z,\Gamma)^\infty + O_{L^\infty}(h^2))$. Note that by the boundary condition for $\partial \ell$ and the fact that $\Gamma\subset \subset \{y\in \partial \Omega \mid g(\nu(y), \partial_{y_1}\}$, we have the comparison $\frac{1}{C} d(z,\Gamma)\leq \phi(z) - \ell(z) \leq C d(z,\Gamma)$ for $z$ on the support of $b$ . So by analyzing $d(z,\Gamma) \leq \sqrt{h}$ and $dist(z,\Gamma) \geq \sqrt{h}$ we have that 
$$e^{-\phi/h} h^2\Delta_g (e^{\ell/h} b) = e^{(\ell - \phi)/h}O_{L^\infty}(h^2),\ \ e^{\ell/h}b \mid_{\Gamma} = -e^{(\phi+i\psi)/h}a\mid_\Gamma.$$
We have constructed an ansatz of the form 
\begin{eqnarray}
\label{form of anstaz}
u_{\rm{ans}} = e^{(\phi+i\psi)/h} a + e^{\ell/h} b = e^{(\phi + i\psi)/h}(a + a_h)
\end{eqnarray}
where $a$ is of the form \eqref{amplitude} and $a_h$ satisfies $\|a_h\|_{L^\infty} \leq C$ and $a_h \to 0$ pointwise such that 
\begin{eqnarray}
\label{ansatz}
 h^2(\Delta_g +q) u_{\rm{ans}} = h^2  e^{(\phi+i\psi)/h}(q(a+a_h) + L),\ \ u_{\rm{ans}}\mid_{\Gamma} = 0
\end{eqnarray}
with $\|L\|_{L^2} \leq Ch^2$. 

Following precisely the argument of Prop 3.4 in \cite{DosKenSal}, the ansatz $u_{\rm{ans}}$ combined with Corollary \ref{solve for rhs} allows us to construct CGO which are the key ingredients to solving inverse problems. The only difference is that thanks to the fact that $G_\Gamma$ satisfies the Dirichlet condition on $\Gamma$ our CGO has vanishing trace on $\Gamma$. Note that by switching the sign the technique we have developed applies to $\Gamma_\pm$ compactly contained in $\{z\in \partial\Omega \mid \pm g(\partial_{y_1},\nu(z)) >0\}$ if we consider ansatz in \eqref{ansatz} of the form $e^{\pm(\phi+i\psi)/h}  (a + a^\pm_h) $. Therefore, we are able to construct CGOs $u_\pm$ vanishing on $\Gamma_\pm$ respectively:
\begin{prop}
\label{CGO}
Let $\Omega$ be a bounded smooth domain in a CTA manifold $(M,g)$ and $\Gamma_\pm\subset\partial\Omega$ be an open subset compactly contained in 
$$\{z\in \partial\Omega \mid \pm g(\partial_{y_1}, \nu(z) ) >0\}.$$ 
Given $q\in L^{n/2}$ one can find solutions to
\[(\Delta_g + q) u_\pm = 0,\ \ \ u_\pm\in H^1(\Omega),\ \ u_\pm\mid_{\Gamma_\pm} = 0\]
of the form
\[u_\pm = e^{\frac{\pm(\phi +i\psi)}{h}}(a + a^\pm_h + r)\]
where $a$ is as in \eqref{amplitude}, $\|a^\pm_h\|_{L^\infty} \leq C$, $a^\pm_h \to 0$ pointwise in $\Omega$ as $h\to 0$. The error term $r\in L^p$ is bounded by $\|r\|_{L^2} = o(1)$ and $\|r\|_{p} \leq C$ as $h\to 0$.
\end{prop}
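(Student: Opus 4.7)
The plan is to upgrade the approximate CGO $u_{\mathrm{ans}} = e^{(\phi+i\psi)/h}(a + a_h)$ built in \eqref{form of anstaz}--\eqref{ansatz} to an exact solution by adding a correction of the form $-e^{(\phi+i\psi)/h} r$, and then solving for $r$ with vanishing trace on $\Gamma_+$ via the Dirichlet resolvent of Proposition \ref{green's function}. Since \eqref{ansatz} already gives $u_{\mathrm{ans}}|_{\Gamma_+} = 0$ and leaves the residue $h^2 e^{(\phi+i\psi)/h}(q(a+a_h) + L)$ with $\|L\|_{L^2} \leq Ch^2$, the required $r$ must satisfy
$$h^2 e^{-(\phi+i\psi)/h}(\Delta_g + q)\bigl(e^{(\phi+i\psi)/h} r\bigr) = h^2 q(a + a_h) + h^2 L, \qquad r|_{\Gamma_+} = 0.$$

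The key maneuver is to recast this as an equation in the \emph{real}-phase setup treated by Corollary \ref{solve for rhs cor}. Since $\psi$ is real-valued, $|e^{\pm i\psi/h}| = 1$, so the substitution $s := e^{i\psi/h} r$ is an isometry on every $L^p$ and preserves the trace on $\Gamma_+$. Using $e^{(\phi+i\psi)/h} r = e^{\phi/h} s$ and multiplying the equation through by $e^{i\psi/h}$, I would reduce to
$$h^2 e^{-\phi/h}(\Delta_g + q)(e^{\phi/h} s) = h^2 q \tilde a + \tilde L, \qquad s|_{\Gamma_+} = 0,$$
where $\tilde a := e^{i\psi/h}(a + a_h)$ is uniformly bounded in $L^\infty$ and $\tilde L := h^2 e^{i\psi/h} L$ satisfies $\|\tilde L\|_{L^2} \leq Ch^4 \leq Ch^2$. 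Corollary \ref{solve for rhs cor} then delivers $s$ with $\|s\|_{L^2} = o(1)$ and $\|s\|_{L^p} = O(1)$; unwinding, $r = e^{-i\psi/h} s$ inherits the same bounds and the vanishing trace, so $u_+ := u_{\mathrm{ans}} - e^{(\phi+i\psi)/h} r = e^{(\phi+i\psi)/h}(a + a_h + r)$ is a CGO of the claimed form.

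The $u_-$ case is entirely analogous after flipping signs and invoking the ``$-$'' version of Corollary \ref{solve for rhs cor}, whose Dirichlet data is placed on $\Gamma_-$ rather than $\Gamma_+$. I do not expect any substantive obstacle: the only nontrivial point is the complex-phase-to-real-phase reduction enabled by the reality of $\psi$, after which the construction becomes a direct invocation of the Dirichlet Green's function of Proposition \ref{green's function} packaged as Corollary \ref{solve for rhs cor}. This is exactly the pattern used in Proposition~3.4 of \cite{DosKenSal}, the only new ingredient being that our resolvent already encodes the boundary condition $r|_{\Gamma_\pm}=0$.
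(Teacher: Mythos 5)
Your proposal is correct and takes essentially the same route as the paper, which simply refers to the argument of Proposition~3.4 of \cite{DosKenSal} combined with Corollary~\ref{solve for rhs cor}. The only thing you have added beyond the paper's terse sketch is to make explicit the reduction from the complex phase $\phi + i\psi$ to the real phase $\phi = y_1$ via the unimodular substitution $s = e^{i\psi/h}r$, which is indeed the (implicit) step needed to invoke Corollary~\ref{solve for rhs cor} and which clearly preserves all the $L^p$ bounds and the vanishing trace on $\Gamma_\pm$.
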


\end{subsection}
\end{section}
\begin{section}{Proof of Theorem \ref{main theorem}}

We prove Theorem \ref{main theorem} using the ideas of \cite{DosKenSal}. The procedure is standard so we only give a sketch here. Let $\Gamma_\pm$ be open sets such that \[\partial\Omega\backslash{\bf B} \subset\subset \Gamma_+\subset\subset \{z\in \partial\Omega \mid g(\partial_{y_1},\nu(z) > 0)\},\ \ \partial\Omega\backslash{\bf F} \subset\subset \Gamma_-\subset\subset \{z\in \partial\Omega \mid g(\partial_{y_1}, \nu(z)) < 0\}.\]

Using Proposition \ref{CGO} we construct solutions $u_\pm \in H^1(\Omega)$ solving \[(\Delta + q_1) u_+ = 0,\ \ \ u_+\mid_{\Gamma_+} = 0,\ \ (\Delta+q_2) u_- = 0,\ \ \ u_- \mid_{\Gamma_-} = 0\] of the form
\[u_\pm = e^{\frac{\pm(\phi +i\psi)}{h}}(a_\pm + a^\pm_h + r_\pm),\ \ \ \|r_\pm\|_{L^2}= o(1),\ \ \ \|r_\pm\|_{L^p} = O(1).\]
where $a_\pm$ are of the form \eqref{amplitude}.

Since $u_\pm$ are solutions belonging to $H^1(\Omega)$ and with vanishing trace on $\partial\Omega\backslash{\bf B}$ and $\partial\Omega\backslash{\bf F}$ respectively, we have the following boundary integral identity (see Lemma A.1 of \cite{DosKenSal})
\[\int_\Omega  u_- (q_1-q_2) u_+ = 0.\]
Inserting the expressions for $u_\pm$ gives
{\small\begin{eqnarray}
\label{integrate to zero}
0=\int_\Omega q(a_+  a_- +   a^-_h a^+_h +   a^-_h a_++ a^+_h   a_-+   a^-_h r_+ + a^+_h r_- + a_+r_- +   a_-r_+ + r_+r_-)
\end{eqnarray}}
where $q= q_1 -q_2$. Writing $q$ as the sum of a $L^\infty$ function with an arbitrarily small $L^{\frac{n}{2}}$ function we see using the estimates of Proposition \ref{CGO} that in the limit as $h\to 0$ the only surviving term is

\[ 0= \int_\Omega q a_+ a_-dV_g=\int_{0}^\infty \int_{{\mathcal S}^{n-2}}  \int_{-\infty}^\infty q(y_1, t,\theta) e^{i(y_1\lambda + i\lambda t)} \beta(\theta)dy_1  d\theta dr.\]
Where the function $\beta$, the coordinates $\theta$ and $t$ are chosen as in the definition of \eqref{amplitude}. The proof now follows precisely as in \cite{DosKenSal} to show $q = 0$.\qed

\end{section}

\end{document}